\newtheorem{lemma}{Lemma}[section]
\newtheorem{theorem}[lemma]{Theorem}
\newtheorem{proposition}[lemma]{Proposition}
\newtheorem{corollary}[lemma]{Corollary}
\newcommand{\N}{\ifmmode{{\Bbb N}}\else{\mbox{${\Bbb N}$}}\fi}
\newcommand{\R}{\ifmmode{{\Bbb R}}\else{\mbox{${\Bbb R}$}}\fi}
\newcommand{\halfscript}[2]{%
  \mathord{\hbox{
    \valign{%
      \vfil##\vfil\cr
      \hbox{$#1$}\cr
      \hbox{$\scriptstyle#2$}\cr
    }%
    \kern\scriptspace
  }}%
}
\begin{document}
	
\title[Numerical study of a Transmission Problem]{Numerical study of a Transmission Problem in Elasticity with kind damping}

\author{K. Ammari}
\address{LR Analysis and Control of PDEs, LR 22ES03, Department of Mathematics, Faculty of Sciences of Monastir, University of Monastir, 5019 Monastir, Tunisia}
\email{kais.ammari@fsm.rnu.tn}

\author{V. Komornik}
\thanks{The author was supported by the following grants: NSFC No. 11871348, CAPES: No. 88881.520205/2020-01,  
MATH AMSUD: 21-MATH-03.}
\address{D\'epartement de math\'ematique, Universit\'e de Strasbourg, 7 rue René Descartes, 67084 Strasbourg Cedex, France}
\email{komornik@math.unistra.fr}

\author{M. Sep\'{u}lveda-Cortés}
\thanks{The author was supported by the following grants: Fondecyt-ANID project 1220869,
and ANID-Chile through Centro de Modelamiento  Matem\'atico  (FB210005)}
\address{DIM and CI$^2$MA, Universidad de Concepci\'on, Concepci\'on, Chile}
\email{maursepu@udec.cl}

\author{O. Vera-Villagrán}
\thanks{The author is partially financed by UTA MAYOR 2022-2023, 4764-22.}
\address{Departamento de Matem\'{a}tica, Universidad
de Tarapac\'{a}, Av. 18 de Septiembre 2222, Arica, 
Chile}
\email{opverav@academicos.uta.cl}

\date{}

\begin{abstract}
We investigate a transmission problem featuring a specific type of damping. Our primary focus is on analyzing the asymptotic behavior of the associated semigroup, $({\mathcal S}_{\mathcal A}(t))_{t\geq 0}$. We demonstrate that this semigroup exhibits a polynomial rate of decay towards zero when the initial data is taken over the domain ${\mathcal D}({\mathcal A})$. Furthermore, we establish that this decay rate is optimal. To support our theoretical findings, we present a comprehensive numerical study that validates and illustrates the sharpness of the obtained decay rates.
\end{abstract}

\keywords{Transmission problem, Polynomial stability, Semigroup theory, Fractional derivative, Numerical study} 
\subjclass[2020]{34B05, 34D05, 34H05}

\maketitle
\tableofcontents

\section{Introduction}
\setcounter{equation}{0}

Localized frictional damping was studied by several authors in one or more space dimension, \cite{chen},\,\cite{ho},\,\cite{mar},\, \cite{nakao},\,\cite{zua},\,\cite{zuazua}. The main result of the above articles is that localized frictional damping produces exponential decay in time of the solution. A more general result occurs in one-dimensional space where the solution always decays exponentially to zero for any localized frictional damping active over an open subset of the domain.  See \cite{bardos} for example, where necessary and sufficient conditions are given to get stabilization of the wave equation with localized frictional damping. That is, to get the exponential stability, the damping mechanism must be present in a sufficient large neighborhood of the boundary, see also \cite{ho}.

\medskip

On the other hand, it is well-know that the viscoelastic Kelvin-Voigt damping when effective in the whole domain is stronger than the frictional damping. This damping mechanism not only produces exponential stability but also turns the corresponding semigroup into an analytic semigroup, which in particular implies that the system is exponentially stable among other important properties, see Zheng-Liu’s book \cite{liu}. But, on contrary when localized, the Kelvin-Voigt damping is weaker than the frictional damping, in the sense that the corresponding semigroup is not exponentially stable as proved in \cite{liu1}. 
In this paper we deal with the theory of elasticity. We consider the following transmission problem between two elastic materials:
\begin{center}
\includegraphics[scale=0.2]{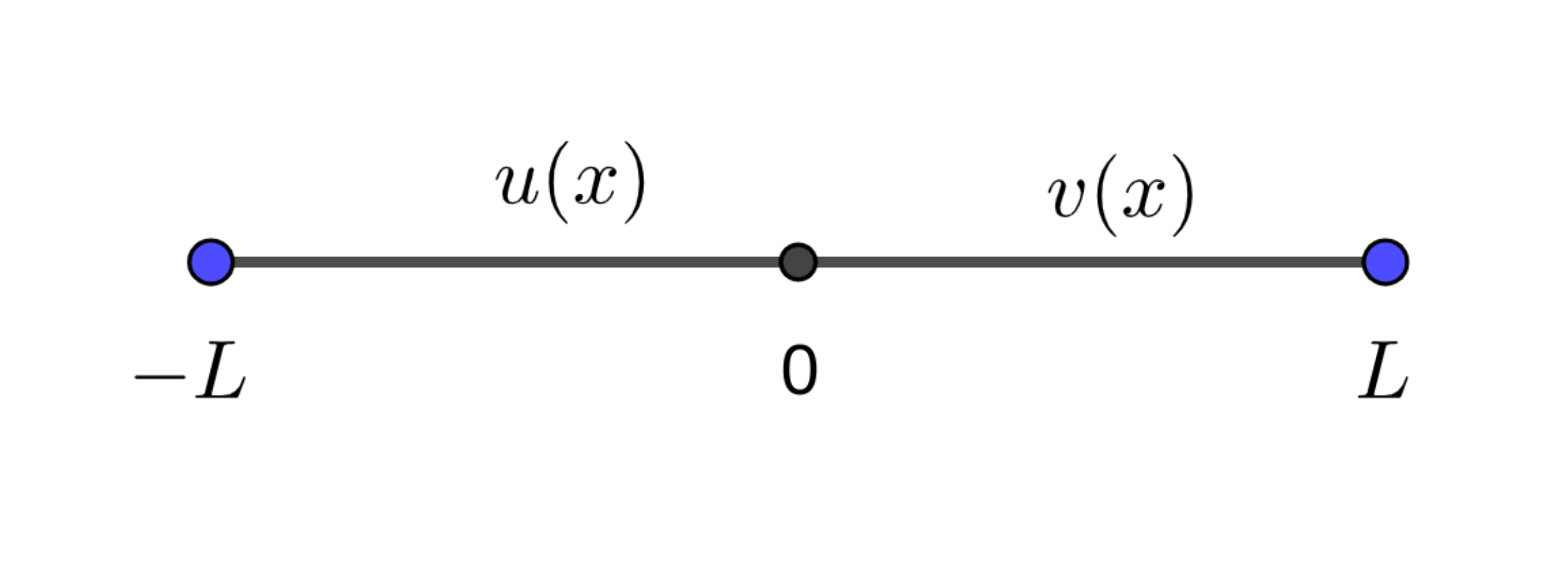} 
\end{center}

\begin{align}
\begin{cases}
\label{101}
\rho_{1}u_{tt} - k_{1}u_{xx} = 0,\quad (x,\,t)\in (-L,\,0)\times (0,\,\infty), \\
\rho_{2}v_{tt} - k_{2}v_{xx} = 0,\quad (x,\,t)\in (0,\,L)\times (0,\,\infty), 
\end{cases}
\end{align}
where $u = u(x,\,t)$ and $v = v(x,\,t)$ are real-valued functions. $k_{1}$ and $k_{2}$ are positive elastic constants. $\rho_{1},$ $\rho_{2}$ stand for the mass densities. Here we consider Dirichlet boundary conditions, which can be as
\begin{align}
\label{102}u(-L,\,t) = 0,\qquad v(L,\,t) = 0,\quad \forall\,t \geq 0.
\end{align}
The transmission conditions are given by
\begin{align}
\label{103}u(0,\,t) = v(0,\,t),\qquad k_{1}u_{x}(0,\,t) = k_{2}v_{x}(0,\,t),\quad \forall\,t\geq 0.
\end{align}
Finally, the initial data read as
\begin{align}
\label{104}
\begin{cases}
u(x,\,0) = u_{0}(x),\quad u_{t}(x,\,0) = u_{1}(x)\quad{\rm in}\quad (-L,\,0),  \\
v(x,\,0) = v_{0}(x),\quad v_{t}(x,\,0) = v_{1}(x)\quad{\rm in}\quad (0,\,L).
\end{cases}
\end{align}
In Alves {\it et. al.} the authors studied a transmission system with viscoelastic damping. In this work, we remove the viscoelastic dissipation and place dissipations with fractional derivative in the Caputo sense. Indeed, we will consider the following system
\begin{eqnarray}
\begin{cases}
\label{105}
\rho_{1}u_{tt} - k_{1}u_{xx} + \partial_{t}^{\alpha,\,\eta}u = 0,\quad (x,\,t)\in (-L,\,0)\times (0,\,\infty), \\
\rho_{2}v_{tt} - k_{2}v_{xx}  + \partial_{t}^{\alpha,\,\eta}v = 0,\quad (x,\,t)\in (0,\,L)\times (0,\,\infty), \\
u(-L,\,t) = 0,\quad v(L,\,t) = 0,\quad \forall\,t \geq 0 \\
u(0,\,t) = v(0,\,t),\quad k_{1}u_{x}(0,\,t) = k_{2}v_{x}(0,\,t),\quad \forall\,t\geq 0, \\
u(x,\,0) = u_{0}(x),\quad u_{t}(x,\,0) = u_{1}(x)\quad{\rm in}\quad (-L,\,0),  \\
v(x,\,0) = v_{0}(x),\quad v_{t}(x,\,0) = v_{1}(x)\quad{\rm in}\quad (0,\,L).
\end{cases}
\end{eqnarray}
where $u = u(x,\,t)$ and $v = v(x,\,t)$ are real-valued functions. $k_{1}$ and $k_{2}$ are positive elastic constants. $\rho_{1},$ $\rho_{2}$ stand for the mass densities. 

\medskip

The rest of the paper is divided into five sections. In Section 2, we show that the system \eqref{105} may be replaced by an augmented system \eqref{209} obtained by coupling an equation with a suitable diffusion, and we study the energy functional associated to system. In Section 3, we establish the existence and uniqueness of solutions of the system \eqref{209}. For this we use \cite{kais, kaisbis}. In section 4 we prove the strong stability of the system \eqref{209}. In Section 5, we study of the polynomial stability. 

\medskip

Throughout this paper, $C$ is a generic constant, not necessarily the same at each occasion (it may change from line to line) and depending on the indicated quantities.

\section{Augmented Model} \label{model}
\setcounter{equation}{0}

Initially, we provide a brief review of fractional calculus. For
the fractional integral, there are several slightly different
definitions for the fractional derivative operator. Our understood
the concept in the Caputo sense (see \cite{C1, C2, C3, KST}).

\medskip

Let $0 < \alpha < 1.$  The Caputo fractional integral of order
$\alpha$ is defined by
\begin{align}\nonumber
I^{\alpha}f(t) =
\frac{1}{\Gamma(\alpha)}\int_{0}^{t}(t - s)^{\alpha - 1}
f(s)ds,
\end{align}
where $\Gamma$ is the well-known gamma function, $f \in
L^{1}(0,\,+\infty).$ 
The Caputo fractional derivative operator of order $\alpha$ is
defined by
\begin{equation}
\nonumber 
D^{\alpha}f(t) = I^{1 - \alpha}Df(t):=
\frac{1}{\Gamma(1 - \alpha)}\int_{0}^{t}(t - s)^{-\alpha}
f'(s)ds,
\end{equation}
with $f \in W^{1,\,1}(0,\,+\infty).$ 
We note that Caputo definition of fractional derivative does possess a
very simple interpretation, that means, if the function $f(t)$
represents the strain history within a viscoelastic material whose
relaxation function is $[\Gamma(1 - \alpha)t^{\alpha}]^{-1}$ then
the material will experience at  any time $t$ a total stress given
the expression $D^{\alpha}f(t).$
Moreover, it easy to show that  $D^{\alpha}$ is a left inverse of
$I^{\alpha},$  but in general it is not a right inverse. Indeed, we have
\begin{eqnarray*}
D^{\alpha}I^{\alpha}f = f, \qquad I^{\alpha}D^{\alpha}f(t) =  f(t)
- f(0).
\end{eqnarray*}
For more properties of fractional
calculus  see \cite{SKM}. \\
\\
In this paper, we consider different version those
\eqref{201} and \eqref{202}.  Indeed, Choi and
MacCamy \cite{CM} establish the following definition of fractional
integro-differential operators  with weight exponential. Let  $0 <
\alpha < 1\,$ and $\eta \ge 0.$ The exponential fractional integral of
order $\alpha$ is defined by
\begin{equation}
\label{201} 
I^{\alpha,\,\eta}f(t) =
\frac{1}{\Gamma(\alpha)}\int_{0}^{t}e^{-\eta(t - s)}(t -
s)^{\alpha - 1}f(s)ds,\quad {\rm with}\  f \in L^{1}[0,\,+\infty).
\end{equation}
The exponential fractional derivative operator of order $\alpha$ is
defined by
\begin{equation}
\label{202}
\partial_{t}^{\alpha,\,\eta}f(t) =  \frac{1}{\Gamma(1 - \alpha)}
\int_{0}^{t}e^{-\eta(t - s)}(t -
s)^{-\alpha}f'(s)ds,\quad {\rm with}\  f \in W^{1,\,1}[0,\,+\infty).
\end{equation}
Note that $\partial_{t}^{\alpha,\,\eta}f(t) = I^{1 - \alpha,\,\eta} f'(t).$
\noindent
The following results are going to be used from now:
\begin{theorem}
\cite{15}
\label{theorem51} Let $\mu$ be the function
\begin{eqnarray}
\label{203}\mu(\xi) = |\xi|^{(2\alpha - 1)/2},\quad
\xi\in\mathbb{R},\quad 0 < \alpha < 1.
\end{eqnarray}
Then the relation between the {\it Input} $U$ and the {\it Output}
${\it O}$ is given by the following system
\begin{align}
\label{204}& \varphi_{t}({\pmb{\color{black}{x}}},\,t,\,\xi) + |\xi|^{2}\varphi({\pmb{\color{black}{x}}},\,t,\,\xi) =
\mu(\xi)U(t),\quad \xi\in\mathbb{R},\quad t > 0, \\
\label{205}& \varphi(0,\,\xi) = 0, \\
\label{206}& {\mathcal O} =
\pi^{-1}\sin(\alpha\pi)\int_{\mathbb{R}}\mu(\xi)\varphi(x,\,t,\,\xi)
d\xi,
\end{align}
which implies that
\begin{eqnarray}
\label{207}{\mathcal O} = I^{1 - \alpha}U,\quad {\rm where}\ U\in C([0,\,+\infty)).
\end{eqnarray}
\end{theorem}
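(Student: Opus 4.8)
The plan is to solve the auxiliary parabolic equation \eqref{204}--\eqref{205} in closed form, substitute the solution into the output formula \eqref{206}, interchange the order of integration, and evaluate the resulting $\xi$-kernel explicitly; the reflection formula for $\Gamma$ then produces exactly the Caputo fractional integral.

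\smallskip

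First, fixing $x$ and $\xi$, equation \eqref{204} is a scalar linear first order ODE in $t$ subject to the initial condition \eqref{205}, so by the variation of constants formula
$$\varphi(x,\,t,\,\xi) = \mu(\xi)\int_{0}^{t} e^{-|\xi|^{2}(t-s)}\,U(s)\,ds.$$
Inserting this into \eqref{206} and using $\mu(\xi)^{2} = |\xi|^{2\alpha-1}$ gives
$${\mathcal O} = \frac{\sin(\alpha\pi)}{\pi}\int_{\mathbb{R}}\int_{0}^{t} |\xi|^{2\alpha-1}\,e^{-|\xi|^{2}(t-s)}\,U(s)\,ds\,d\xi .$$
I would then interchange the two integrations by Fubini's theorem. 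This is legitimate because $U$, being continuous on $[0,+\infty)$, is bounded on $[0,t]$; the exponent satisfies $2\alpha-1>-1$, so $|\xi|^{2\alpha-1}$ is integrable near $\xi=0$; and the Gaussian factor controls the integrand for large $|\xi|$, uniformly for $s$ in compact subsets of $[0,t)$. The only mild singularity occurs at $s=t$, and it is harmless since after the $\xi$-integration the remaining kernel is $(t-s)^{-\alpha}$ with $\alpha<1$.

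\smallskip

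After the interchange,
$${\mathcal O} = \frac{\sin(\alpha\pi)}{\pi}\int_{0}^{t} U(s)\left(\int_{\mathbb{R}} |\xi|^{2\alpha-1}\,e^{-|\xi|^{2}(t-s)}\,d\xi\right)ds .$$
Writing $\tau=t-s>0$ and using the substitution $r=|\xi|^{2}\tau$, the inner integral becomes
$$\int_{\mathbb{R}} |\xi|^{2\alpha-1}\,e^{-|\xi|^{2}\tau}\,d\xi = 2\int_{0}^{\infty}\xi^{2\alpha-1}e^{-\xi^{2}\tau}\,d\xi = \tau^{-\alpha}\int_{0}^{\infty} r^{\alpha-1}e^{-r}\,dr = \Gamma(\alpha)\,\tau^{-\alpha}.$$
Hence ${\mathcal O} = \dfrac{\sin(\alpha\pi)\,\Gamma(\alpha)}{\pi}\displaystyle\int_{0}^{t}(t-s)^{-\alpha}U(s)\,ds$, and Euler's reflection formula $\Gamma(\alpha)\Gamma(1-\alpha)=\pi/\sin(\alpha\pi)$ rewrites the constant as $1/\Gamma(1-\alpha)$. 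Comparing with the definition $I^{\beta}f(t)=\Gamma(\beta)^{-1}\int_0^t (t-s)^{\beta-1}f(s)\,ds$ with $\beta=1-\alpha$ gives ${\mathcal O}=I^{1-\alpha}U$, which is \eqref{207}.

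\smallskip

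The argument is essentially elementary and I do not expect a genuine obstacle: the mathematical content of the statement is precisely the closed-form evaluation of the diffusive kernel $\int_{\mathbb{R}}|\xi|^{2\alpha-1}e^{-|\xi|^{2}\tau}\,d\xi$. The two points that deserve attention are the justification of Fubini's theorem (the singularities of $|\xi|^{2\alpha-1}$ at $\xi=0$ when $\alpha\le 1/2$ and of $(t-s)^{-\alpha}$ at $s=t$, both of which are integrable) and the recollection of the reflection formula that converts $\pi^{-1}\sin(\alpha\pi)\Gamma(\alpha)$ into $\Gamma(1-\alpha)^{-1}$.
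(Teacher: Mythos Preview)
Your proof is correct and follows essentially the same route as the paper: the paper does not give a full proof of this theorem (it cites \cite{15}), but the sketch it provides immediately afterwards---solve the linear ODE for $\varphi$ by Duhamel, substitute, apply Fubini, and invoke the Gamma function---is exactly what you carry out, with the added details of the explicit $\xi$-substitution and the use of Euler's reflection formula to identify the constant.
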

\noindent
The strategy for to get our target is related to the elimination of the fractional derivatives in time from the domain condition in system \eqref{104}. To this, setting $\mu(\xi) = |\xi|^{(2\alpha - 1)/2},$ $\xi\in\mathbb{R},$ $\mathfrak{C} = \pi^{-1}\sin(\alpha\pi),$ and exploiting the technique from \cite{huang}, we reduce \eqref{104} to the system. In fact, we will introduce the equation
\begin{align*}
& \varphi_{t}(x,\,t,\,\xi) + (\xi^{2} + \eta)\varphi({\pmb{\color{black}{x}}},\,t,\,\xi) - u(x,\,t)\mu(\xi) = 0,\quad \xi\in\mathbb{R},\  \eta\geq 0,\  t > 0, \\
& \varphi(x,\,0,\,\xi) = 0,
\end{align*}
where $\mu(\xi) = |\xi|^{(2\alpha - 1)/2}.$ Multiplying the above equation by $e^{(\xi^{2} + \eta)t}$ and integrating we get
\begin{eqnarray*}
\varphi(t,\,\xi) = \int_{0}^{t}\mu(\xi)u(x,\,t)e^{-(\xi^{2} + \eta)(t - s)}ds
\end{eqnarray*}
and
\begin{eqnarray*}
\int_{\mathbb{R}}\mu(\xi)\varphi(x,\,t,\,\xi)d\xi = \int_{\mathbb{R}}\int_{0}^{t}\mu^{2}(\xi)u(x,\,t)e^{-(\xi^{2} + \eta)(t - s)}dsd\xi.
\end{eqnarray*}
On the other hand, using the Fubini Theorem and recalling the definition of the Gamma function, we get that
\begin{equation}
\label{208}
\partial_{t}^{\alpha,\,\eta}u(x,\,t) =  \mathfrak{C}\int_{\mathbb{R}}\mu(\xi)\varphi(x,\,t,\,\xi)d\xi.
\end{equation}
For more details of this deduction see \cite{15}. Thus, we reformulate system \eqref{103} using Theorem
\ref{theorem51}, that means, this system can be included into the augmented model
\begin{eqnarray}
\left\lbrace
\label{209}
\begin{array}{l}
\displaystyle\rho_{1}u_{tt} - k_{1}u_{xx} + \mathfrak{C}\int_{\mathbb{R}}\mu(\xi)\varphi_{1}(x,\,t,\,\xi)
d\xi = 0,  \\
\displaystyle\rho_{2}v_{tt} - k_{2}v_{xx} + \mathfrak{C}\int_{\mathbb{R}}\mu(\xi)\varphi_{2}(x,\,t,\,\xi)
d\xi  = 0 \\
\varphi_{1t}(x,\,t,\,\xi) + (|\xi|^{2} + \eta)\varphi_{1}(x,\,t,\,\xi)  = \mu(\xi)u_{t}(x,\,t), \\ 
\varphi_{2t}(x,\,t,\,\xi) + (|\xi|^{2} + \eta)\varphi_{2}(x,\,t,\,\xi)  = \mu(\xi)v_{t}(x,\,t), \\ 
u(-L,\,t) = 0,\quad v(L,\,t) = 0,\quad \forall\,t \geq 0,  \\
k_{1}u_{x}(0,\,t) = k_{2}v_{x}(0,\,t), \quad u(0,\,t) = v(0,\,t)\\
u(x,\,0) = u_{0}(x),\quad u_{t}(x,\,0) = u_{1}(x)\quad{\rm in}\quad (-L,\,0), \\
v(x,\,0) = v_{0}(x),\quad v_{t}(x,\,0) = v_{1}(x)\quad{\rm in}\quad (0,\,L).  \\
\varphi_{1}(x,\,0,\,\xi) = 0,\quad \varphi_{2}(x,\,0,\,\xi) = 0
\end{array}
\right. 
\end{eqnarray}
where $u = u(x,\,t),$ $v = v(x,\,t)$ are real-valued functions and $(x,\,t,\,\xi)\in (-L,\,L)\times (0,\,+\infty)\times \mathbb{R}.$

On the other hand, we shall consider the following technical lemma. Lemma \ref{L2.4} will be used for the proof of strong stability.
\begin{lemma}
\label{L2.4}
Let $0<\alpha<1.$ If $\eta>0$ and $\lambda\in \mathbb{R},$ or if $\eta = 0$ and $\lambda>0,$ then 
\begin{align*}	
E(\lambda,\,\alpha,\,\eta):= \displaystyle\int_{\mathbb{R}}\dfrac{|\xi|^{2\alpha - 1}d\xi}{(|\xi|^{2} + \eta + i\lambda)} < +\infty. 
\end{align*}
Furthermore, for $h\in L^{2}(\mathbb{R}; L^{2}(\Omega)),$ we have that 
\begin{align*}
H(x,\,\lambda,\,\alpha,\,\eta):= \displaystyle\int_{\mathbb{R}}\dfrac{|\xi|^{\frac{2\alpha - 1}{2}}h(x,\,\xi)d\xi}{|\xi|^{2} + \eta + i\lambda}\in L^{2}(\Omega). 
\end{align*}
\end{lemma}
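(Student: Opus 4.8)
The plan is to establish the two claims in order, reducing everything to the finiteness of $E(\lambda,\alpha,\eta)$ and to a dominated-convergence-type bound for $H$. First I would treat the scalar integral $E(\lambda,\alpha,\eta)$. Writing the integrand in absolute value, $\left|\frac{|\xi|^{2\alpha-1}}{|\xi|^2+\eta+i\lambda}\right| = \frac{|\xi|^{2\alpha-1}}{\sqrt{(|\xi|^2+\eta)^2+\lambda^2}}$, I split the domain into a neighbourhood of the origin $\{|\xi|\le 1\}$ and its complement $\{|\xi|\ge 1\}$. For $|\xi|\ge 1$ the denominator is at least $|\xi|^2$, so the integrand is bounded by $|\xi|^{2\alpha-3}$, which is integrable at infinity precisely because $2\alpha-3<-1$ for every $\alpha<1$; convergence here needs nothing about $\eta$ or $\lambda$. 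For $|\xi|\le 1$ the numerator behaves like $|\xi|^{2\alpha-1}$, integrable near $0$ since $2\alpha-1>-1$, and the only danger is the denominator vanishing. If $\eta>0$ then $|\xi|^2+\eta+i\lambda$ has modulus $\ge\eta>0$ on all of $\mathbb{R}$, so the integrand is bounded by $\eta^{-1}|\xi|^{2\alpha-1}$ there; if $\eta=0$ but $\lambda>0$, the modulus is $\ge|\lambda|>0$, giving the same conclusion with $\eta^{-1}$ replaced by $|\lambda|^{-1}$. (The excluded case $\eta=\lambda=0$ is exactly where the denominator $|\xi|^2$ makes the integral diverge at the origin, which is why the hypotheses are stated as they are.) Combining the two pieces gives $E(\lambda,\alpha,\eta)<\infty$.

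Next I would prove $H(x,\cdot)\in L^2(\Omega)$. The natural route is Minkowski's integral inequality together with Cauchy–Schwarz in the $\xi$-variable. Fixing $x$, write $|\xi|^{\frac{2\alpha-1}{2}} = |\xi|^{\frac{2\alpha-1}{4}}\cdot\frac{|\xi|^{\frac{2\alpha-1}{4}}}{|\xi|^2+\eta+i\lambda}$ so that, applying Cauchy–Schwarz in $\xi$ to the product $|\xi|^{\frac{2\alpha-1}{4}}\cdot\frac{|\xi|^{\frac{2\alpha-1}{4}}h(x,\xi)}{|\xi|^2+\eta+i\lambda}$, one gets
\begin{align*}
|H(x,\lambda,\alpha,\eta)|^2 \le \left(\int_{\mathbb{R}}\frac{|\xi|^{\frac{2\alpha-1}{2}}\,d\xi}{|\xi|^2+\eta+i\lambda}\right)\left(\int_{\mathbb{R}}\frac{|\xi|^{\frac{2\alpha-1}{2}}|h(x,\xi)|^2\,d\xi}{|\xi|^2+\eta+i\lambda}\right),
\end{align*}
except that the denominators must be replaced by their moduli for this to be a genuine inequality; doing so, the first factor is a finite constant $C_1:=E(\lambda,\alpha,\eta)$ by the first part (with the modulus in the denominator, which only shrinks the integrand further), and the second factor is bounded by $\sup_\xi \frac{|\xi|^{\frac{2\alpha-1}{2}}}{\bigl||\xi|^2+\eta+i\lambda\bigr|}\cdot\int_{\mathbb{R}}|h(x,\xi)|^2\,d\xi$ — but that supremum is not finite near $\xi=0$ when $\alpha<1/2$, so instead I would keep the weight inside and integrate in $x$: by Tonelli, $\int_\Omega |H(x,\cdot)|^2\,dx \le C_1\int_{\mathbb{R}}\frac{|\xi|^{\frac{2\alpha-1}{2}}}{\bigl||\xi|^2+\eta+i\lambda\bigr|}\left(\int_\Omega|h(x,\xi)|^2\,dx\right)d\xi$, and now the weight $\frac{|\xi|^{\frac{2\alpha-1}{2}}}{\bigl||\xi|^2+\eta+i\lambda\bigr|}$ is bounded: for $|\xi|\le 1$ it is $\le \min(\eta,|\lambda|)^{-1}$-bounded times $|\xi|^{\frac{2\alpha-1}{2}}\le 1$ — wait, $|\xi|^{\frac{2\alpha-1}{2}}\to\infty$ as $\xi\to0$ when $\alpha<1/2$. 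The clean fix is to instead apply Cauchy–Schwarz splitting the weight symmetrically as above but then bound $\int_\Omega|H(x,\cdot)|^2dx \le C_1 \int_{\mathbb R}\int_\Omega \frac{|\xi|^{\frac{2\alpha-1}{2}}|h(x,\xi)|^2}{\bigl||\xi|^2+\eta+i\lambda\bigr|}\,dx\,d\xi$ and use $\frac{|\xi|^{\frac{2\alpha-1}{2}}}{\bigl||\xi|^2+\eta+i\lambda\bigr|}\le \frac{|\xi|^{(2\alpha-1)/2}}{|\xi|^2}$ for $|\xi|\ge1$ (bounded, in fact $\le1$) and $\le C\,|\xi|^{(2\alpha-1)/2}$ for $|\xi|\le1$, which is bounded only if $\alpha\ge1/2$; for $\alpha<1/2$ one uses instead that $h\in L^2(\mathbb{R};L^2(\Omega))$ means $\xi\mapsto\|h(\cdot,\xi)\|_{L^2(\Omega)}$ is square-integrable, and combines with $\frac{|\xi|^{(2\alpha-1)/2}}{\bigl||\xi|^2+\eta+i\lambda\bigr|}\in L^2(\mathbb{R})$ (checked exactly as in part one, since squaring replaces exponents $2\alpha-1$ by $2\alpha-1$ and $|\xi|^2$ by $|\xi|^4$, still integrable at both ends) via Cauchy–Schwarz in $\xi$ between these two $L^2(\mathbb{R})$ functions.

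The cleanest packaging, which I would actually write, is therefore: $|H(x,\lambda,\alpha,\eta)| \le \int_{\mathbb{R}} g(\xi)\,\|h(x,\xi)\| \,$-type estimates are awkward, so instead view $H(x,\cdot)$ as the pairing of $h(x,\cdot)\in L^2(\mathbb{R})$ with the fixed kernel $K_\xi := \frac{|\xi|^{(2\alpha-1)/2}}{|\xi|^2+\eta+i\lambda}$, and show $K\in L^2(\mathbb{R})$ directly (same two-region argument as for $E$, with exponents $2\alpha-1$ at the origin where it is $>-1$, and $|\xi|^{2\alpha-1}/|\xi|^4 = |\xi|^{2\alpha-5}$ at infinity where $2\alpha-5<-1$). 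Then by Cauchy–Schwarz in $\xi$, $|H(x)| \le \|K\|_{L^2(\mathbb{R})}\,\|h(x,\cdot)\|_{L^2(\mathbb{R})}$ pointwise in $x$, and squaring and integrating over $\Omega$ gives $\|H\|_{L^2(\Omega)}^2 \le \|K\|_{L^2(\mathbb{R})}^2 \int_\Omega \|h(x,\cdot)\|_{L^2(\mathbb{R})}^2\,dx = \|K\|_{L^2(\mathbb{R})}^2\,\|h\|_{L^2(\mathbb{R};L^2(\Omega))}^2 < \infty$, which is exactly the claim. The main obstacle is purely bookkeeping: making sure the exponent arithmetic at $\xi=0$ and $\xi=\infty$ is handled for the whole range $0<\alpha<1$ (in particular that the singularity $|\xi|^{(2\alpha-1)/2}$ at the origin, which blows up when $\alpha<1/2$, is still square-integrable there), and being careful that the denominator is kept away from zero by $\eta$ or by $\lambda$ in the respective cases — there is no deep analytic difficulty once the correct $L^2(\mathbb{R})$ kernel bound is isolated.
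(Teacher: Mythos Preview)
Your final argument is correct and coincides with the paper's: for $E$ you split $\{|\xi|\le 1\}\cup\{|\xi|\ge 1\}$ and check the exponents, and for $H$ you recognise $K(\xi)=\dfrac{|\xi|^{(2\alpha-1)/2}}{|\xi|^{2}+\eta+i\lambda}\in L^{2}(\mathbb{R})$ (the paper calls $\|K\|_{L^{2}}^{2}=G(\lambda,\alpha,\eta)$) and apply Cauchy--Schwarz in $\xi$ followed by integration in $x$. The only difference is organisational: the paper first writes $E=F+i\lambda G$ and proves $G<\infty$ as part of the first claim, so the kernel bound needed for $H$ is already in hand; you should excise the intermediate false starts and go straight to the $K\in L^{2}(\mathbb{R})$ argument.
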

\begin{proof}
Note that $E(\lambda,\,\alpha,\,\eta) = F(\lambda,\,\alpha,\,\eta) + i\lambda G(\lambda,\,\alpha,\,\eta),$ where 
\begin{align*}
F(\lambda,\,\alpha,\,\eta):=\displaystyle\int_{\mathbb{R}}\dfrac{(|\xi|^{2} + \eta)|\xi|^{2\alpha - 1}d\xi}{\lambda^{2} + (|\xi|^{2} + \eta)^{2}}\quad {\rm and}\quad G(\lambda,\,\alpha,\,\eta):=\displaystyle\int_{\mathbb{R}}\dfrac{|\xi|^{2\alpha - 1}d\xi}{\lambda^{2} + (|\xi|^{2} + \eta)^{2}}.
\end{align*}
Using that
\begin{equation*}
G(\lambda,\,\alpha,\,\eta) = 2\int_{0}^{1}\dfrac{|\xi|^{2\alpha - 1}d\xi}{\lambda^{2} + (|\xi|^{2} + \delta)^{2}} + 2\int_{1}^{+\infty}\dfrac{|\xi|^{2\alpha - 1}d\xi}{\lambda^2 + (|\xi|^{2} + \alpha)^{2}}.
\end{equation*}
Since in both cases, ($\eta > 0$ and $\lambda\in \mathbb{R}$) or ($\eta=0$ and $\lambda>0$), we obtain
$$
\dfrac{|\xi|^{2\alpha - 1}}{\lambda^2 + (|\xi|^{2} + \eta)^2}\sim \dfrac{|\xi|^{2\alpha-1}}{\lambda^2 + \eta^2}\quad \mbox{as}\quad |\xi|\to 0\quad \mbox{and}\quad \dfrac{|\xi|^{2\alpha - 1}}{\lambda^{2} + (|\xi|^{2} + \eta)^{2}}\sim \dfrac{1}{|\xi|^{5 - 2\omega}}\quad \mbox{as}\quad |\xi|\to +\infty,
$$
it follows that $G(\lambda,\,\eta) < + \infty.$ In a similar, 
\begin{equation*}
F(\lambda,\,\alpha,\,\eta) = 2\int_{0}^{1}\dfrac{(|\xi|^{2} + \eta)|\xi|^{2\alpha - 1}d\xi}{\lambda^{2} + (|\xi|^{2} + \eta)^{2}} + 2\int_{1}^{+\infty}\dfrac{(|\xi|^{2} + \eta)|y|^{2\alpha - 1}d\xi}{\lambda^2 + (|\xi|^{2} + \alpha)^{2}},
\end{equation*}		
and, if ($\eta>0$ and $\lambda\in \mathbb{R}$) or  ($\eta = 0$ and $\lambda>0$), we obtain 
$$
\dfrac{(|\xi|^{2} + \eta)|\xi|^{2\alpha - 1}}{\lambda^{2} + (|\xi|^{2} + \eta)^{2}}\sim \dfrac{(|\xi|^{2} + \eta)|\xi|^{2\alpha-1}}{\lambda^{2} + \eta^{2}}\quad \mbox{for}\quad |\xi|\to 0 
$$
and 
$$
\dfrac{(|\xi|^{2} + \eta)|\xi|^{2\alpha - 1}}{\lambda^{2} + (|\xi|^{2} + \eta)^{2}}\sim \dfrac{1}{|\xi|^{3 - 2\alpha}}\quad \mbox{for}\quad |\xi|\to +\infty.
$$	
Thus, $F(\lambda,\,\alpha,\,\eta) < +\infty,$ and consequently, it follows that $E(\lambda,\,\alpha,\,\eta) < +\infty.$ Moreover, from the Cauchy-Schwarz inequality and the fact that $h\in L^{2}(\mathbb{R}; L^{2}(0,\,L)),$ it follows that	
\begin{equation*}
\int_{\Omega}\left|H(x,\,\lambda,\,\alpha,\,\eta)\right|^{2}dx = \left(\int_{\mathbb{R}}\dfrac{|\xi|^{2\alpha - 1}d\xi}{\lambda^{2} + (|\xi|^{2} + \eta)^{2}}\right)\int_{\Omega}\int_{\mathbb{R}}|h(x,\,\xi)|^{2}d\xi dx < +\infty. 
\end{equation*}

\end{proof}

\section{Setting of the Semigroup}\label{sec:semigroup}
\setcounter{equation}{0}

In this section we establish the well-posedness of system
\eqref{209}. We thus define the phase space associated with our set of equations \eqref{209} by
\begin{align*}
\mathbb{H}^m = H^{m}(-L,\,0)\times H^{m}(0,\,L)\quad m = 1,\,2,\quad \mathbb{L}^{2} = L^{2}(-L,\,0)\times L^{2}(0,\,L).
\end{align*}
\begin{align*}
\mathbb{H}_{0}^{1} = \{(u,\,v)\in \mathbb{H}^{1};\ u(-L) = v(L) = 0,\ u(0) = v(0)\}.
\end{align*}
Under the above conditions, we have that the phase space is given by
\begin{eqnarray*}
\mathcal{H}=\mathbb{H}_{0}^{1} \times \mathbb{L}^{2}\times L^{2}(\mathbb{R};\,\mathbb{L}^{2}),
\end{eqnarray*}
which is a Hilbert space endowed with the following norm. For all $\mathbb{U}=(u,\,v,\,U,\,V,\,\varphi_{1},\,\varphi_{2})^{T}\in {\mathcal H}$ ($T$ meaning transpose),
\begin{eqnarray*}
\|\mathbb{U}\|_{{\mathcal H}}^{2}
& = &  \left[\rho_{1}\|u_{t}\|_{L^{2}(-L,\,0)}^{2} + k_{1}\|u_{x}\|_{L^{2}(-L,\,0)}^{2} + \rho_{2}\|v_{t}\|_{L^{2}(0,\,L)}^{2} + k_{2}\|v_{x}\|_{L^{2}(0,\,L)}^{2} \right. \nonumber  \\
\label{210}&  & \left.\quad +\ \mathfrak{C}\|\varphi_{1}(t)\|_{L^{2}(\mathbb{R}; L^{2}(-L,\,0))}^{2} + \mathfrak{C}\|\varphi_{2}(t)\|_{L^{2}(\mathbb{R}; L^{2}(0,\,L))}^{2} \right]
\end{eqnarray*}
and it derives from a natural inner product  $\langle \cdot , \cdot \rangle_{\mathcal{H}}$ on $\mathcal{H}$ given by
\begin{align}
\langle \mathbb{U},\,\widetilde{\mathbb{U}}\rangle_{{\mathcal H}} = &\ \rho_{1}\langle U,\,\widetilde{U} \rangle_{L^{2}(-L,\,0)} + \rho_{2}\langle V,\,\widetilde{V} \rangle_{L^{2}(0,\,L)} + k_{1}\langle u_{x},\,\widetilde{u}_{x}\rangle_{L^{2}(-L,\,0)} + k_{2}\langle v_{x},\,\widetilde{v}_{x}\rangle_{L^{2}(0,\,L)}  \nonumber \\
\label{301}& + \mathfrak{C}
\langle\varphi_{1},\,\widetilde{\varphi}_{1}\rangle_{L^{2}\left(\mathbb{R}; L^{2}(-L,\,0)\right)} + \mathfrak{C}
\langle\varphi_{2},\,\widetilde{\varphi}_{2}\rangle_{L^{2}\left(\mathbb{R}; L^{2}(0,\,L)\right)}.
\end{align}
Hence, defining $\mathbb{U}=(u,\,v,\,U,\,V,\,\varphi_{1},\,\varphi_{2})^{T},$ the set of equations \eqref{209} can be written under the form of an abstract evolution problem:
 \begin{equation}
 \label{302} \frac{d}{dt}\mathbb{U}(t) \, = \, \mathcal{A}\mathbb{U}(t), \, \forall \, t>0, \, \mathbb{U}(0) = \mathbb{U}_{0},
 \end{equation}
 where $\mathbb{U}=(u,\,v,\,U,\,V,\,\varphi_{1},\,\varphi_{2})^{T}$ and $\mathbb{U}_0 =(u_{0},\,v_{0},\,u_{1},\,v_{1},\,0,\,0)^{T}$ and the operator $\,\mathcal{A}$ is an unbounded linear operator defined as follows $\,\mathcal{A}:\mathcal{D}(\mathcal{A})\subset
\mathcal{H}\rightarrow \mathcal{H}$ with
\begin{equation}
\label{303}\mathcal{A}\left(
\begin{array}{c}
u \\
\\
v \\
\\
U \\
\\
V \\
\\
\varphi_{1} \\
\\
\varphi_{2} 
\end{array}
\right) =\left(
\begin{array}{c} 
U \\
\\
V \\
\\
\displaystyle \frac{1}{\rho_{1}}\left[k_{1}u_{xx} - \mathfrak{C}\displaystyle\int_{\mathbb{R}}\mu(\xi)\varphi_{1}(\xi)d\xi \right] \\
\\
\displaystyle \frac{1}{\rho_{2}}\left[k_{2}v_{xx} - \mathfrak{C}\displaystyle\int_{\mathbb{R}}\mu(\xi)\varphi_{2}(\xi)d\xi \right]  \\
\\
-(|\xi|^{2} + \eta)\varphi_{1}(\xi) + \mu(\xi)U \\
\\
-(|\xi|^{2} + \eta)\varphi_{2}(\xi) + \mu(\xi)V 
\end{array}
\right)
\end{equation}
with domain 
{\begin{align*}
\mathcal{D}(\mathcal{A}) = &
\left\{\mathbb{U} = (u,\,v,\,U,\,V,\,\varphi_{1},\,\varphi_{2})^{T}\in {\mathcal H}:\ U,\,V\in \mathbb{H}_{L},\ (u,\,v)\in \mathbb{H}^{2},\ k_{1}u_{x}(0) = k_{2}v_{x}(0)\right.  \\
& \quad |\xi|\varphi_{1}\in L^{2}(\mathbb{R};\,L^{2}(-L,\,0)),\,-(|\xi|^{2} + \eta)\varphi_{1} + \mu(\xi)U \in
L^{2}\left(\mathbb{R};\,L^{2}(-L,\,0)\right), \\
& \quad |\xi|\varphi_{2}\in L^{2}(\mathbb{R};\,L^{2}(0,\,L)),\,-(|\xi|^{2} + \eta)\varphi_{2} + \mu(\xi)V \in
L^{2}\left(\mathbb{R};\,L^{2}(0,\,L)\right)\}.
\end{align*}

Let the self-adjoint and strictly positive  operator $\, A :\mathcal{D}(A)\subset
H\rightarrow H$ which is given by
\begin{equation}
\label{304b} A \left(
\begin{array}{c}
u \\
v 
\end{array}
\right) =\left(
\begin{array}{c}
- \frac{k_1}{\rho_1} \, u_{xx} 
\\
- \frac{k_2}{\rho_2} \, v_{xx}  
\end{array}
\right),
\end{equation}
with the domain
$\mathcal{D}(A) =  \left\{(u,v) \in \mathbb{H}^2, u(-L)=v(L)=0, u(0)=v(0), 
k_1 u_x (0) = k_2 v_x(0) \right\}$  
and $H = \mathbb{L}^2$.

\medskip

So, the problem (\ref{103}) can be rewritten as following, as in \cite{kais,kaisbis}:

\begin{equation}
\label{abstract}
\left\{
\begin{array}{ll}
 Z_{\bf tt}({\bf t}) + AZ({\bf t}) + BB^* \partial_{\bf t}^{\alpha,\eta}Z ({\bf t})= 0, \ {\bf t} > 0 \\
Z(0) = Z_{0}, Z_{\bf t}(0) =  Z_{1},
\end{array}
\right.
\end{equation}

where $B = B^*=I_{H}, Z({\bf t}) = (u({\bf t}),\,v({\bf t})^{T}$ and 
$Z_{0} = (u_{0},\,v_{0})^{T},\quad Z_{1} = (u_{1},\,v_{1})^{T}.$

\medskip

We define 

\begin{eqnarray}
\label{301bis}\mathcal{H}_{0} & = &
\mathbb{H}_{0}^1 \times \mathbb{H}_{0}^{1} \times \mathbb{L}^{2} \times \mathbb{L}^{2}    
\end{eqnarray}
equipped with the inner product given by
\begin{align}
\langle {\mathcal U},\,\tilde{{\mathcal U}}\rangle_{{\mathcal H}_{0}} = &
\rho_1 \, \int_{-L}^{\ell}U\overline{\tilde{U}}dx +
\rho_2 \, \int_{0}^{L}V\overline{\tilde{V}}dx  +\ k_1 \, \int_{-L}^{0}u_{x}\overline{\tilde{u}}_{x}dx
+ k_2 \, \int_{0}^{L}v_{x}
\overline{\tilde{v}}_{x}dx,
\label{302bis}
\end{align}
where ${\mathcal U}=(u,\,v,\,U,\,V)^{T}$ and $\ \tilde{{\mathcal U}}=(\tilde{u},\,\tilde{v},\,\tilde{U},\,\tilde{V})^{T}.$ 

\medskip

Then, the operator 
$\,\mathcal{A}_{0}:\mathcal{D}(\mathcal{A}_{0})\subset
\mathcal{H}_{0}\rightarrow \mathcal{H}_{0}$ given by
\begin{equation}
\label{304bis}\mathcal{A}_{0} \left(
\begin{array}{c}
u \\
\\
v\\
\\
U \\
\\
V
\end{array}
\right) =\left(
\begin{array}{c}
U \\
\\
V \\
\\
\frac{k_1}{\rho_1} \, u_{xx} - U \\
\\
\frac{k_2}{\rho_2} \, v_{xx} - V 
\end{array}
\right).
\end{equation}
with the domain
\begin{align*}
\mathcal{D}({\mathcal A}_{0}) =  \left\{\mathbb{U} = (u,\,v,\,U,\,V)^{T}\in {\mathcal H}_0:\ U,\,V\in \mathbb{H}_{0}^1,\ u,\,v\in \mathbb{H}^{2},\ k_{1}u_{x}(0) = k_{2}v_{x}(0) \right\},
\end{align*}

 generates a C$_{0}$-semigroup of contractions in ${\mathcal H}_{0}, (e^{t\mathcal{A}_0})_{t \geq 0}.$ Moreover, the following auxiliary problem:

\begin{eqnarray}
\left\lbrace
\label{606}
\begin{array}{l}
\displaystyle\rho_{1}u_{tt} - k_{1}u_{xx} + u_t = 0,  \quad{\rm in}\quad (-L,\,0) \times (0,+\infty),\\
\displaystyle\rho_{2}v_{tt} - k_{2}v_{xx} + v_t  = 0, \quad{\rm in}\quad (0,\,L) \times (0,+\infty), \\
u(-L,\,t) = 0,\quad v(L,\,t) = 0,\quad \forall\,t > 0,  \\
k_{1}u_{x}(0,\,t) = k_{2}v_{x}(0,\,t), \quad u(0,\,t) = v(0,\,t)\\
u(x,\,0) = u_{0}(x),\quad u_{t}(x,\,0) = u_{1}(x)\quad{\rm in}\quad (-L,\,0), \\
v(x,\,0) = v_{0}(x),\quad v_{t}(x,\,0) = v_{1}(x)\quad{\rm in}\quad (0,\,L),   
\end{array}
\right. 
\end{eqnarray}
admits a unique solution $(u(x,t),\,v(x,t))$ such that if $(u_{0},\,v_{0},\,u_{1},\,v_{1})\in {\mathcal D}({\mathcal A}_{0})$, then the solution $(u(x,\,{\bf t}),\,v(x,\,{\bf t})$ of \eqref{606} verifies the following regularity property:
\begin{align*}
{\mathcal U}=(u,\,v,\,u_{t},\,v_{t})\in C([0,\,+\infty),\,{\mathcal D}({\mathcal A}_{0}))\cap C^{1}([0,\,+\infty),\,{\mathcal H}_{0}),
\end{align*}
and when $(u_{0},\,v_{0},\,u_{1},\,v_{1})\in {\mathcal H}_{0}$, then 
\begin{align*}
{\mathcal U}=(u,v,u_{t},v_{t})\in C([0,\,+\infty),\,{\mathcal H}_{0}).
\end{align*}

So, according to \cite{kais,kaisbis}, we have the following proposition: 
\begin{proposition}
\label{lu}
The operator $\mathcal{A}$ is the infinitesimal generator of a
contraction semigroup $(e^{t\mathcal{A}})_{t\geq 0}.$
\end{proposition}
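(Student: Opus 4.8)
The standard route is the Lumer–Phillips theorem: I would show that $\mathcal{A}$ is dissipative and that $I-\mathcal{A}$ (equivalently $\lambda I-\mathcal{A}$ for some $\lambda>0$) is surjective onto $\mathcal{H}$; together with the density of $\mathcal{D}(\mathcal{A})$ in $\mathcal{H}$ this yields the desired contraction semigroup.

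First I would verify dissipativity. For $\mathbb{U}=(u,v,U,V,\varphi_{1},\varphi_{2})^{T}\in\mathcal{D}(\mathcal{A})$, compute $\mathrm{Re}\langle\mathcal{A}\mathbb{U},\mathbb{U}\rangle_{\mathcal{H}}$ directly from the definitions \eqref{301} and \eqref{303}. Integration by parts in $x$ on the two elastic equations produces the boundary terms $k_1 u_x(0)\overline{U(0)}-k_2 v_x(0)\overline{V(0)}$ (the endpoints $x=-L$ and $x=L$ drop out because $U(-L)=V(L)=0$), and these cancel because the transmission conditions $k_1u_x(0)=k_2v_x(0)$ and $u(0)=v(0)$ (hence $U(0)=V(0)$) hold on $\mathcal{D}(\mathcal{A})$; the cross terms coupling $u,v$ with $\varphi_1,\varphi_2$ via $\mathfrak{C}\int_{\mathbb{R}}\mu(\xi)\varphi_j\,d\xi$ cancel against the $\mu(\xi)U$, $\mu(\xi)V$ terms in the $\varphi$-equations, and what survives is
\begin{align*}
\mathrm{Re}\langle\mathcal{A}\mathbb{U},\mathbb{U}\rangle_{\mathcal{H}} = -\mathfrak{C}\int_{\mathbb{R}}(|\xi|^{2}+\eta)\|\varphi_{1}(\cdot,\xi)\|_{L^{2}(-L,0)}^{2}\,d\xi - \mathfrak{C}\int_{\mathbb{R}}(|\xi|^{2}+\eta)\|\varphi_{2}(\cdot,\xi)\|_{L^{2}(0,L)}^{2}\,d\xi \le 0,
\end{align*}
since $\mathfrak{C}=\pi^{-1}\sin(\alpha\pi)>0$ for $0<\alpha<1$ and $|\xi|^2+\eta\ge 0$. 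Hence $\mathcal{A}$ is dissipative.

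Next, maximality: given $\mathbb{F}=(f_{1},f_{2},g_{1},g_{2},h_{1},h_{2})^{T}\in\mathcal{H}$, solve $(I-\mathcal{A})\mathbb{U}=\mathbb{F}$. From the first two rows, $u-U=f_{1}$, $v-V=f_{2}$, so $U=u-f_{1}$, $V=v-f_{2}$. The $\varphi$-rows give $\varphi_{1}=\frac{\mu(\xi)U+h_{1}}{1+|\xi|^{2}+\eta}$ and likewise for $\varphi_{2}$; substituting $U=u-f_1$ makes $\varphi_j$ explicit in terms of $u$ (resp. $v$) and the data. Plugging these into the $U$- and $V$-rows yields, after eliminating $U,V,\varphi_1,\varphi_2$, an elliptic transmission system for $(u,v)$ of the form $-k_j(\cdot)_{xx} + (\rho_j + d)\,(\cdot) = (\text{data})$ on $(-L,0)$ and $(0,L)$, where $d = \mathfrak{C}\int_{\mathbb{R}}\frac{\mu(\xi)^2}{1+|\xi|^2+\eta}\,d\xi = \mathfrak{C}\int_{\mathbb{R}}\frac{|\xi|^{2\alpha-1}}{1+|\xi|^2+\eta}\,d\xi$, which is a finite positive constant by Lemma \ref{L2.4} (applied with $\lambda=0$ if $\eta>0$, or directly since the integrand is integrable when $\eta\ge 0$ once the $1$ is present). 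This system is coercive, and its weak formulation on $\mathbb{H}^1_0$ — where the bilinear form is $\sum_j k_j\int u_x\overline{w_x} + \sum_j(\rho_j+d)\int u\overline{w}$ — satisfies the hypotheses of the Lax–Milgram theorem, so a unique weak solution $(u,v)\in\mathbb{H}^1_0$ exists; elliptic regularity upgrades it to $\mathbb{H}^2$ with the natural transmission condition $k_1u_x(0)=k_2v_x(0)$ recovered from the variational identity. One then checks that the reconstructed $(U,V,\varphi_1,\varphi_2)$ satisfy the membership and integrability requirements defining $\mathcal{D}(\mathcal{A})$ — in particular $|\xi|\varphi_j\in L^2(\mathbb{R};\mathbb{L}^2)$ and $-(|\xi|^2+\eta)\varphi_j+\mu(\xi)U_j\in L^2(\mathbb{R};\mathbb{L}^2)$, which follow from the explicit formula for $\varphi_j$ together with $h_j\in L^2(\mathbb{R};\mathbb{L}^2)$ and the decay of $\mu(\xi)/(1+|\xi|^2+\eta)$. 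Finally, $\mathcal{D}(\mathcal{A})$ is dense in $\mathcal{H}$ (it contains, e.g., smooth compactly-supported-in-$\xi$ data with the right boundary/transmission conditions). Lumer–Phillips then gives that $\mathcal{A}$ generates a $C_0$-semigroup of contractions $(e^{t\mathcal{A}})_{t\ge 0}$.

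The main obstacle is the elimination step producing the elliptic system and the verification that the reconstructed $\varphi_j$ land in $\mathcal{D}(\mathcal{A})$: one must be careful that the $\xi$-integrals defining $d$ and the coupling terms converge — this is exactly where Lemma \ref{L2.4} is essential — and that the functional-analytic setting ($L^2(\mathbb{R};\mathbb{L}^2)$ for $\varphi_j$, the weight $\mathfrak{C}$, the transmission conditions encoded in $\mathbb{H}^1_0$) is handled consistently. Alternatively, since the excerpt already records that $\mathcal{A}_0$ generates a contraction semigroup and cites \cite{kais,kaisbis} for the abstract second-order formulation \eqref{abstract}, one may instead invoke those references directly: the system \eqref{209} is precisely the augmented form of \eqref{abstract} with $A$ self-adjoint strictly positive and $B=B^*=I_H$ bounded, a setting in which \cite{kais,kaisbis} establish generation of a contraction semigroup, so the proposition follows. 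I would present the Lumer–Phillips argument as the primary proof and mention the reference-based shortcut as a remark.
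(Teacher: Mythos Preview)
Your Lumer--Phillips argument is correct: the dissipativity computation is exactly the one underlying \eqref{314} (and reused in the proof of Proposition~\ref{P4.1}), and the maximality step via the elliptic transmission system with zero-order coefficient $\rho_j+d$, $d=\mathfrak{C}\int_{\mathbb{R}}\frac{|\xi|^{2\alpha-1}}{1+|\xi|^2+\eta}\,d\xi<\infty$, goes through by Lax--Milgram on $\mathbb{H}^1_0$ as you describe.

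However, the paper does not carry out this direct argument. Its ``proof'' of Proposition~\ref{lu} is precisely the shortcut you mention at the end: after rewriting the problem in the abstract form \eqref{abstract} with $A$ self-adjoint strictly positive (defined in \eqref{304b}) and $B=B^*=I_H$ bounded, the paper simply invokes \cite{kais,kaisbis}, where the generation result for the augmented operator associated with such second-order systems with fractional damping is already established. So the paper's approach is entirely by reference to the abstract theory, with no dissipativity/maximality computation spelled out here.

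The trade-off is clear: your route is self-contained and makes the role of the transmission conditions and of Lemma~\ref{L2.4} explicit, at the cost of repeating a calculation that \cite{kais,kaisbis} perform once and for all in the abstract setting; the paper's route is a one-line citation but requires the reader to check that the present system fits the hypotheses of those references (which the preceding paragraphs on $A$, $\mathcal{A}_0$, and \eqref{abstract} are meant to establish). Since you already identified both paths, you have essentially covered what the paper does --- just with the emphasis reversed.
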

 
From the Proposition \ref{lu}, we deduce that the system \eqref{209} is well-posed in the energy space ${\mathcal H}$ and we have the following theorem:
\begin{align}
\label{313} \mathbb{U}=(u,\,v,\,U,\,V,\,\varphi_{1},\,\varphi_{2})^{T}\quad
{\rm and} \quad
\mathbb{U}_{0}=(u_{0},\,v_{0},\,u_{1},\,u_{1},\,0,\,0)^{T},
\end{align}
\begin{corollary}(Existence and uniqueness of solutions)
\label{exis}
If $(u_{0},\,v_{0},\,u_{1},\,v_{1},\,0,\,0)\in {\mathcal H},$ 
the problem \eqref{209} admits a unique solution
\begin{equation*}
(u,\,v,\,u_{t},\,v_{t},\,\varphi_{1},\,\varphi_{2}) \in C\left([0,\,+\infty); {\mathcal H}\right),
\end{equation*}
and for $(u_{0},\,v_{0},\,u_{1},\,v_{1},\,0,\,0)\in {\mathcal D}({\mathcal A})$, the problem \eqref{209} admits a unique solution
\begin{equation*}
(u,\,v,\,u_{t},\,v_{t},\,\varphi_{1},\,\varphi_{2})\in C\left([0,\,+\infty); {\mathcal D}({\mathcal A})\right)\cap C^{1}\left([0,\,+\infty); {\mathcal H}\right).
\end{equation*}
Moreover, the energy in time $t\geq 0$ is given by
\begin{align}
E(t) = &\ \frac{1}{2}\left[\rho_{1}\|u_{t}\|_{L^{2}(-L,\,0)}^{2}dx + k_{1}\|u_{x}\|_{L^{2}(-L,\,0)}^{2} + \rho_{2}\|v_{t}\|_{L^{2}(0,\,L)}^{2} + k_{2}\|v_{x}\|_{L^{2}(0,\,L)}^{2} \right. \nonumber  \\
\label{213}& \left.\qquad +\ \mathfrak{C}\|\varphi_{1}(t)\|_{L^{2}(\mathbb{R}; L^{2}(-L,\,0))}^{2} + \mathfrak{C}\|\varphi_{2}(t)\|_{L^{2}(\mathbb{R}; L^{2}(0,\,L))}^{2}  \right]
\end{align}
which satisfies
\begin{align}
\label{314}\frac{d}{dt}E(t) = &- \mathfrak{C}\int_{\mathbb{R}}(|\xi|^{2} + \eta)\|\varphi_{1}(t,\,\xi)\|_{L^{2}(-L,\,0)}^{2}d\xi - \mathfrak{C}\int_{\mathbb{R}}(|\xi|^{2} + \eta)\|\varphi_{2}(t,\,\xi)\|_{L^{2}(0,\,L)}^{2}d\xi. 
\end{align}
\end{corollary}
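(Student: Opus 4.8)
The plan is to read off the two well-posedness assertions from Proposition~\ref{lu} together with the classical theory of $C_{0}$-semigroups, and then to derive the energy law \eqref{314} by differentiating $\tfrac12\|\mathbb{U}(t)\|_{\mathcal H}^{2}$ along smooth trajectories, finishing with a density argument for general data.

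Since, by Proposition~\ref{lu}, $\mathcal A$ generates a contraction $C_{0}$-semigroup $(e^{t\mathcal A})_{t\ge0}$ on $\mathcal H$, the abstract Cauchy problem \eqref{302} possesses, for every $\mathbb{U}_{0}\in\mathcal H$, a unique mild solution $\mathbb{U}(t)=e^{t\mathcal A}\mathbb{U}_{0}\in C([0,+\infty);\mathcal H)$ and, for every $\mathbb{U}_{0}\in\mathcal D(\mathcal A)$, a unique classical solution $\mathbb{U}\in C([0,+\infty);\mathcal D(\mathcal A))\cap C^{1}([0,+\infty);\mathcal H)$ satisfying $\tfrac{d}{dt}\mathbb{U}(t)=\mathcal A\mathbb{U}(t)$ in $\mathcal H$ for all $t\ge0$. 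Unwinding the identification $\mathbb{U}=(u,v,u_{t},v_{t},\varphi_{1},\varphi_{2})^{T}$ yields exactly the two regularity statements for \eqref{209}, while comparison of \eqref{213} with \eqref{301} shows $E(t)=\tfrac12\|\mathbb{U}(t)\|_{\mathcal H}^{2}$.

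To obtain \eqref{314}, fix first $\mathbb{U}_{0}\in\mathcal D(\mathcal A)$; since $\mathbb{U}\in C^{1}([0,+\infty);\mathcal H)$ and the norm of $\mathcal H$ is Hilbertian,
\[
\frac{d}{dt}E(t)=\mathrm{Re}\,\langle \mathcal A\mathbb{U}(t),\mathbb{U}(t)\rangle_{\mathcal H}.
\]
Inserting \eqref{303} into \eqref{301}, the second-order terms of $\mathcal A\mathbb{U}$ combine with the $k_{i}$-terms of the inner product: integrating by parts in $x$ and using $U(-L)=V(L)=0$, the interface conditions $u(0)=v(0)$, $U(0)=V(0)$ and the flux condition $k_{1}u_{x}(0)=k_{2}v_{x}(0)$, all boundary contributions cancel, so that the sum of the $k_{i}$-terms has zero real part. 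For the coupling terms, Fubini's theorem gives $\langle\int_{\mathbb R}\mu(\xi)\varphi_{1}(\cdot,\xi)\,d\xi,\,U\rangle_{L^{2}(-L,0)}=\langle\varphi_{1},\mu U\rangle_{L^{2}(\mathbb R;L^{2}(-L,0))}$, and similarly on the $v$-side, whence the $-\mathfrak C$-contribution arising from the $U,V$-lines of \eqref{303} cancels, in real part, the $+\mathfrak C$-contribution arising from the $\varphi_{1},\varphi_{2}$-lines. What survives is precisely
\[
\mathrm{Re}\,\langle\mathcal A\mathbb{U},\mathbb{U}\rangle_{\mathcal H}=-\mathfrak C\int_{\mathbb R}(|\xi|^{2}+\eta)\|\varphi_{1}(t,\xi)\|_{L^{2}(-L,0)}^{2}\,d\xi-\mathfrak C\int_{\mathbb R}(|\xi|^{2}+\eta)\|\varphi_{2}(t,\xi)\|_{L^{2}(0,L)}^{2}\,d\xi,
\]
which is \eqref{314}. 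For general $\mathbb{U}_{0}\in\mathcal H$ one approximates by $\mathbb{U}_{0}^{n}\in\mathcal D(\mathcal A)$ and passes to the limit in the integrated form $E(t)-E(0)=-\int_{0}^{t}(\cdots)\,ds$, using strong continuity of $(e^{t\mathcal A})_{t\ge0}$ and continuity of the quadratic forms involved; in particular $E$ is nonincreasing.

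The only step needing care is this inner-product computation: one must justify the interchange of the $\xi$-integral with the $x$-integral in the coupling terms and check that the boundary terms are meaningful. Both are ensured by the integrability underlying the definition of $\mathcal D(\mathcal A)$ — namely $|\xi|\varphi_{i}\in L^{2}(\mathbb R;\mathbb L^{2})$, which makes $\int_{\mathbb R}(|\xi|^{2}+\eta)\|\varphi_{i}(\xi)\|^{2}\,d\xi$ finite, together with a bound of the Lemma~\ref{L2.4} type for $\int_{\mathbb R}|\xi|^{2\alpha-1}(|\xi|^{2}+\eta)^{-1}\,d\xi$ (cf. \cite{kais,kaisbis}) — and by the one-dimensional embedding $\mathbb H^{2}\hookrightarrow C^{1}$ for the traces; beyond this bookkeeping there is no genuine obstacle.
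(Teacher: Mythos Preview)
Your proof is correct and follows exactly the approach the paper intends: the corollary is stated immediately after Proposition~\ref{lu} with the sentence ``From the Proposition~\ref{lu}, we deduce that the system \eqref{209} is well-posed\ldots'', and no further argument is given there; the identity $\mathrm{Re}\,\langle\mathcal A\mathbb U,\mathbb U\rangle_{\mathcal H}$ equal to the right-hand side of \eqref{314} is precisely the computation the paper later invokes verbatim in the proof of Proposition~\ref{P4.1}. Your write-up is in fact more detailed than the paper's, in particular the density step and the bookkeeping on the $\xi$-integrability and traces.
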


\section{Strong stability}

In this section, the following theorem plays an important role.
\begin{theorem}(Arendt-Batty \cite{Arendt})
\label{Arendt}
	Let ${\mathcal A}$ be the generator of a C$_{0}$-semigroup $({\mathcal S}(t))_{t\ge 0}$ in a reflexive Banach space $X.$ If the following conditions are satisfied:
	\begin{enumerate}
		\item [(i)] ${\mathcal A}$ has no purely imaginary eigenvalues,
		\item [(ii)] $\sigma({\mathcal A})\cap i\mathbb{R}$ is countable,
	\end{enumerate}
	then, $\{{\mathcal S}(t)\}_{t\ge 0}$ is strongly stable.
\end{theorem}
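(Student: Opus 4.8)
I would recognize this (the Arendt--Batty theorem) as the stability result of Arendt, Batty, Lyubich and Vu, and argue via the boundary behaviour of the Laplace transform of an orbit together with a Tauberian theorem. Since in our application $\mathcal A$ generates a \emph{contraction} semigroup, I may and do assume throughout that $(\mathcal S(t))_{t\ge 0}$ is bounded, say $\|\mathcal S(t)\|\le M$ for all $t\ge 0$ (without a growth bound the statement is false). The goal is then to show $\mathcal S(t)x\to 0$ for every fixed $x\in X$.

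Fix $x\in X$ and put $f(t)=\mathcal S(t)x$. Then $f$ is bounded, $\|f(t)\|\le M\|x\|$, and uniformly continuous on $[0,\infty)$, because $\|f(t+h)-f(t)\|\le M\,\|\mathcal S(h)x-x\|$ uniformly in $t$. Its Laplace transform is $\hat f(\lambda)=\int_0^\infty e^{-\lambda t}f(t)\,dt=R(\lambda,\mathcal A)x$, holomorphic on $\{\operatorname{Re}\lambda>0\}$. Put $iE:=\sigma(\mathcal A)\cap i\mathbb R$; by hypothesis (ii), $E\subseteq\mathbb R$ is countable, and since the resolvent $R(\,\cdot\,,\mathcal A)$ is holomorphic on the open set $\rho(\mathcal A)$, the function $\hat f$ extends holomorphically to a neighbourhood of $i\mathbb R\setminus iE$.

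The decisive step is to control $\hat f$ near the exceptional set. For $\eta\in E$ write $\alpha\,\hat f(\alpha+i\eta)=\alpha\,R(\alpha,\mathcal A-i\eta)x$ and note that $(e^{-i\eta t}\mathcal S(t))_{t\ge 0}$ is a bounded $C_0$-semigroup with generator $\mathcal A-i\eta$. Because $X$ is reflexive, the Abel mean ergodic theorem gives $\lim_{\alpha\downarrow 0}\alpha\,R(\alpha,\mathcal A-i\eta)x=P_\eta x$, where $P_\eta$ is the mean ergodic projection onto $\ker(\mathcal A-i\eta)$. Hypothesis (i) forces $\ker(\mathcal A-i\eta)=\{0\}$, hence $P_\eta=0$ and $\lim_{\alpha\downarrow 0}\alpha\,\hat f(\alpha+i\eta)=0$ for every $\eta\in E$. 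I then invoke the Arendt--Batty--Lyubich--Vu Tauberian theorem: a bounded, uniformly continuous $X$-valued function $f$ on $[0,\infty)$ whose Laplace transform extends holomorphically across $i\mathbb R$ off a closed countable set $iE$ and satisfies the non-resonance condition $\alpha\,\hat f(\alpha+i\eta)\to 0$ at each point of $iE$ necessarily satisfies $f(t)\to 0$ as $t\to\infty$. Applying this yields $\mathcal S(t)x\to 0$, and as $x$ was arbitrary the semigroup is strongly stable.

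The main obstacle is precisely this last Tauberian theorem: its proof carries the genuine analytic content of the result --- a contour-integration estimate localized around $i\mathbb R$ in which the \emph{countability} of $E$ enters through a Baire-category argument to turn holomorphic extendability off $iE$ into a decay statement --- so a self-contained treatment would have to develop that. A secondary but essential point is the use of reflexivity in the mean ergodic step (automatic here, where $X=\mathcal H$ is a Hilbert space). The remaining ingredients --- uniform continuity of the orbit, the resolvent representation of $\hat f$, and holomorphic extension off $\sigma(\mathcal A)\cap i\mathbb R$ --- are routine.
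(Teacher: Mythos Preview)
The paper does not prove this theorem at all: it is stated as a classical result and attributed to \cite{Arendt}, then used as a black box to deduce Theorem~\ref{T4.5}. There is therefore no ``paper's own proof'' to compare your attempt against.

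That said, your sketch is a faithful outline of the standard Arendt--Batty--Lyubich--Vu argument: identify the Laplace transform of an orbit with the resolvent, extend holomorphically across $i\mathbb R\setminus iE$, use mean ergodicity in the reflexive space to kill the Abel means at points of $iE$ (this is where hypothesis~(i) enters), and then apply the Tauberian theorem with countable spectrum. You are also right that the Tauberian step carries the real weight and that a fully self-contained proof would need to develop it; everything else you list is indeed routine. One small caution: the statement as written in the paper omits the boundedness hypothesis on the semigroup, which you correctly flag as necessary and which is automatic in the paper's application since $\mathcal A$ generates a contraction semigroup.
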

\begin{proposition}
	\label{P4.1}
	If $\lambda \in \mathbb{R},$ then $i\lambda I - {\mathcal A}$ is Injective.
\end{proposition}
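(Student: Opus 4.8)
The plan is to show that if $\lambda \in \mathbb{R}$ and $\mathbb{U} = (u,v,U,V,\varphi_1,\varphi_2)^T \in \mathcal{D}(\mathcal{A})$ satisfies $i\lambda\mathbb{U} - \mathcal{A}\mathbb{U} = 0$, then $\mathbb{U} = 0$. First I would write out the eigenvalue system componentwise: $i\lambda u = U$, $i\lambda v = V$, $i\lambda\rho_1 U = k_1 u_{xx} - \mathfrak{C}\int_{\mathbb{R}}\mu(\xi)\varphi_1(\xi)d\xi$, $i\lambda\rho_2 V = k_2 v_{xx} - \mathfrak{C}\int_{\mathbb{R}}\mu(\xi)\varphi_2(\xi)d\xi$, $i\lambda\varphi_1 = -(|\xi|^2+\eta)\varphi_1 + \mu(\xi)U$, and $i\lambda\varphi_2 = -(|\xi|^2+\eta)\varphi_2 + \mu(\xi)V$.

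Next I would take the inner product of $i\lambda\mathbb{U} - \mathcal{A}\mathbb{U} = 0$ with $\mathbb{U}$ in $\mathcal{H}$ and take the real part. Since $\mathcal{A}$ is dissipative (this is implicit in Proposition \ref{lu}, and $\mathrm{Re}\langle\mathcal{A}\mathbb{U},\mathbb{U}\rangle_{\mathcal{H}}$ equals the right-hand side of \eqref{314} up to the factor), we get $0 = \mathrm{Re}\langle i\lambda\mathbb{U} - \mathcal{A}\mathbb{U}, \mathbb{U}\rangle_{\mathcal{H}} = -\mathrm{Re}\langle\mathcal{A}\mathbb{U},\mathbb{U}\rangle_{\mathcal{H}} = \mathfrak{C}\int_{\mathbb{R}}(|\xi|^2+\eta)\|\varphi_1(\xi)\|_{L^2(-L,0)}^2 d\xi + \mathfrak{C}\int_{\mathbb{R}}(|\xi|^2+\eta)\|\varphi_2(\xi)\|_{L^2(0,L)}^2 d\xi$. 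Here I must split into the two cases of the hypothesis. If $\eta > 0$ (and $\lambda$ arbitrary), then $|\xi|^2+\eta > 0$ everywhere, so both integrals vanishing forces $\varphi_1 \equiv 0$ and $\varphi_2 \equiv 0$ on $\mathbb{R}\times(-L,0)$ and $\mathbb{R}\times(0,L)$ respectively. If $\eta = 0$, then $|\xi|^2+\eta = |\xi|^2$ vanishes only at $\xi = 0$, so we conclude $\varphi_1(\xi) = \varphi_2(\xi) = 0$ for a.e.\ $\xi \neq 0$, which still means $\varphi_1 \equiv 0$, $\varphi_2 \equiv 0$ as elements of $L^2(\mathbb{R};\mathbb{L}^2)$.

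From the fifth and sixth equations, $\varphi_1 \equiv 0$ combined with $\mu(\xi)U = (i\lambda + |\xi|^2 + \eta)\varphi_1 = 0$ gives $\mu(\xi)U = 0$; since $\mu(\xi) = |\xi|^{(2\alpha-1)/2}$ is nonzero for $\xi \neq 0$, this forces $U = 0$ in $L^2(-L,0)$, and similarly $V = 0$ in $L^2(0,L)$. Then $i\lambda u = U = 0$ and $i\lambda v = V = 0$. If $\lambda \neq 0$ we immediately get $u = v = 0$ and we are done. If $\lambda = 0$ (only possible in the case $\eta > 0$), the third and fourth equations reduce to $k_1 u_{xx} = 0$ and $k_2 v_{xx} = 0$, so $u$ and $v$ are affine; the boundary conditions $u(-L) = 0$, $v(L) = 0$, the transmission conditions $u(0) = v(0)$ and $k_1 u_x(0) = k_2 v_x(0)$, together with a short computation (or an energy identity $k_1\|u_x\|^2 + k_2\|v_x\|^2 = 0$ obtained by multiplying by $u,v$ and integrating by parts, using that the boundary and transmission terms cancel), force $u \equiv 0$ and $v \equiv 0$. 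Hence $\mathbb{U} = 0$.

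\textbf{Main obstacle.} The only genuinely delicate point is the $\eta = 0$ case, where the dissipation degenerates at $\xi = 0$: one must be careful that "$\varphi_j(\xi) = 0$ for a.e.\ $\xi$" really does follow (a single point has measure zero, so this is fine as an $L^2$ statement) and that the subsequent step $\mu(\xi)U = 0 \Rightarrow U = 0$ is legitimate (again fine, since $\mu(\xi) \neq 0$ off a null set). I expect the write-up to spend most of its length carefully justifying the passage from the vanishing of the dissipation integrals to $\varphi_j = 0$ and then to $U = V = 0$, rather than on the final ODE argument, which is routine.
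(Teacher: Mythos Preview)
Your proposal is correct and follows essentially the same route as the paper: write out the eigenvalue system, use the dissipativity identity to kill $\varphi_1,\varphi_2$, deduce $U=V=0$ from the fifth and sixth equations, and then treat $\lambda\neq 0$ and $\lambda=0$ separately via the resulting ODE system with boundary and transmission conditions. Your discussion of the $\eta=0$ degeneracy at $\xi=0$ is in fact more careful than the paper's, which simply asserts $\varphi_j=0$ without comment.

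One small slip: your parenthetical ``(only possible in the case $\eta>0$)'' is wrong. The proposition claims injectivity of $i\lambda I-\mathcal{A}$ for \emph{all} $\lambda\in\mathbb{R}$, with no restriction on $\eta$; in particular the case $\lambda=0$, $\eta=0$ must be covered. (You may be conflating this with Proposition~\ref{p4.2}, which says $0\in\sigma(\mathcal{A})$ when $\eta=0$ --- but that is a failure of surjectivity, not of injectivity.) Fortunately your argument for $\lambda=0$ does not actually use $\eta>0$ anywhere: you have already established $\varphi_1=\varphi_2=0$ in $L^2$ for either value of $\eta$, so the reduction to $k_1u_{xx}=0$, $k_2v_{xx}=0$ and the affine/boundary argument go through unchanged. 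Just delete the parenthetical.
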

\begin{proof}
Let $\lambda\in \mathbb{R}$ Such that $i\lambda$ is an eigenvalue of the operator ${\mathcal A},$ and let $\mathbb{U}=(u,\,v,\,U,\,V,\,\varphi_{1},\,\varphi_{1})\in \mathcal{D}({\mathcal A})$ be the associated eigenvector. Then ${\mathcal A}\mathbb{U} = i\lambda \mathbb{U}.$ Equivalently 	
\begin{align}
\label{26}
\begin{cases}
U = i\lambda u,  \\
V = i\lambda v, \\
k_{1}u_{xx} -  \mathfrak{C} \displaystyle\int_{\mathbb{R}}\mu(\xi)\varphi_{1}(\xi)d\xi = i\lambda\rho_{1}U,  \\
k_{2}v_{xx} - \mathfrak{C} \displaystyle\int_{\mathbb{R}}\mu(\xi)\varphi_{2}(\xi)d\xi = i\lambda\rho_{2}V \\
(|\xi|^{2} + \eta + i\lambda)\varphi_{1}(\xi) = \mu(\xi)U,  \quad \forall \,\xi\in \mathbb{R} \\
(|\xi|^{2} + \eta + i\lambda)\varphi_{2}(\xi) = \mu(\xi)V,  \quad \forall \,\xi\in \mathbb{R} 
\end{cases}
\end{align}
Note that
\begin{eqnarray*}
& &0 = Re\langle i\lambda \mathbb{U},\,\mathbb{U}\rangle_{\mathcal{H}} \\
& = & -\mathfrak{C}\int_{\mathbb{R}}(|\xi|^{2} + \eta)\|\varphi_{1}(t,\,\xi)\|_{L^{2}(-L,\,0)}^{2}d\xi -  \mathfrak{C}\int_{\mathbb{R}}(|\xi|^{2} + \eta)\|\varphi_{2}(t,\,\xi)\|_{L^{2}(0,\,L)}^{2}d\xi.
\end{eqnarray*}
Therefore
\begin{align}
\label{27}
\begin{cases}
\varphi_{1}(x,\,\xi) = 0\quad \mbox{a.\ e.}\quad {\rm in}\quad (x,\,\xi)\in(-L,\,0)\times\mathbb{R}.\\
\varphi_{2}(x,\,\xi) = 0\quad \mbox{a.\ e.}\quad {\rm in}\quad (x,\,\xi)\in(0,\,L)\times\mathbb{R}.
\end{cases}
\end{align} 
Applying \eqref{27} to \eqref{26}$_{5,\,6}$, we obtain:
\begin{equation}
\label{28}
\begin{cases}
U(x) = 0 \quad  \mbox{a.\ e.}\quad {\rm in}\quad x\in(-L,\,0) \\
V(x) = 0 \quad  \mbox{a.\ e.}\quad {\rm in}\quad x\in(0,\,L).
\end{cases}
\end{equation} 
Now, applying \eqref{28}$_{1,\,2}$ to \eqref{26}$_{1,\,2},$ we have
\begin{align}
\label{29}
\begin{cases}
i\lambda u(x) = 0\quad \mbox{a.\ e.}\quad {\rm in}\quad x\in (-L,\,0), \\
i\lambda v(x) = 0\quad \mbox{a.\ e.}\quad {\rm in}\quad x\in (0,\,L).
\end{cases}
\end{align}
If $\lambda\neq 0$, then $u = 0$ almost everywhere on $(-L,\,0).$ Thus, from \eqref{26}$_{3},$ it follows that $u_{xx}=0$ almost everywhere on $(-L,\,0).$ From the boundary conditions, it follows that $u\equiv 0,$ and therefore, the third equation implies that $U(x) = 0$ almost everywhere in $x\in(-L,\,0).$ Similarly, we obtain $V(x) = 0$ almost everywhere in $x\in(0,\,L).$ \\
Assuming that $\lambda = 0,$ from the third and fourth equations of the system \eqref{26}, along with the boundary conditions of the problem, we obtain the following system
\begin{align}
\begin{cases}
k_{1}u_{xx} = 0 \\
k_{1}v_{xx} = 0 \\
u(-L,\,t) = 0,\quad u(L,\,t) = 0,\quad \forall\,t\geq 0, \\
k_{1}u_{x}(0,\,t) = k_{2}v_{x}(0,\,t),\quad u(0,\,t) = v(0,\,t).
\end{cases}
\end{align}
Then we obtain that $u\equiv 0$ and $v\equiv 0.$ Therefore, in any case, $ker(i\lambda I - {\mathcal A}) = \{0\}.$  
\end{proof}
\begin{corollary}
\label{C4.2}
If $\lambda \in \mathbb{R},$ then $i\lambda$ is not an eigenvalue of ${\mathcal A}$
\end{corollary}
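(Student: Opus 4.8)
The plan is to observe that this corollary is an immediate restatement of Proposition \ref{P4.1}. By definition, a real number $\lambda$ gives rise to an eigenvalue $i\lambda$ of ${\mathcal A}$ precisely when there exists a nonzero vector $\mathbb{U}\in{\mathcal D}({\mathcal A})$ satisfying ${\mathcal A}\mathbb{U}=i\lambda\mathbb{U}$, equivalently $(i\lambda I-{\mathcal A})\mathbb{U}=0$, equivalently $\mathbb{U}\in\ker(i\lambda I-{\mathcal A})$. Thus $i\lambda$ is an eigenvalue of ${\mathcal A}$ if and only if $\ker(i\lambda I-{\mathcal A})\neq\{0\}$, i.e. if and only if $i\lambda I-{\mathcal A}$ fails to be injective.

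First I would invoke Proposition \ref{P4.1}, which asserts that for every $\lambda\in\mathbb{R}$ the operator $i\lambda I-{\mathcal A}$ is injective, that is $\ker(i\lambda I-{\mathcal A})=\{0\}$ (this is exactly the conclusion reached at the end of the proof of Proposition \ref{P4.1}, via the two cases $\lambda\neq 0$ and $\lambda=0$). Combining this with the equivalence above, no nonzero eigenvector can exist, so $i\lambda$ is not an eigenvalue of ${\mathcal A}$ for any $\lambda\in\mathbb{R}$. There is no real obstacle here: the statement is a direct logical consequence of the preceding proposition, recorded separately only because it is the precise hypothesis (i) needed to apply the Arendt--Batty Theorem \ref{Arendt} in the next step of the strong-stability argument.

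\begin{proof}
This is an immediate consequence of Proposition \ref{P4.1}. Indeed, $i\lambda$ is an eigenvalue of ${\mathcal A}$ if and only if $\ker(i\lambda I-{\mathcal A})\neq\{0\}$, i.e. if and only if $i\lambda I-{\mathcal A}$ is not injective. Since Proposition \ref{P4.1} shows that $i\lambda I-{\mathcal A}$ is injective for every $\lambda\in\mathbb{R}$, we conclude that $i\lambda$ is not an eigenvalue of ${\mathcal A}$.
\end{proof}
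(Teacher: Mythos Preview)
Your proposal is correct and matches the paper's approach: the paper states Corollary \ref{C4.2} without a separate proof, treating it as an immediate consequence of Proposition \ref{P4.1}, which is exactly what you do. Your observation that injectivity of $i\lambda I-{\mathcal A}$ is equivalent to $i\lambda$ not being an eigenvalue is the only content needed.
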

\begin{proposition}\label{p4.2}
If $\eta = 0,$ then the operator ${\mathcal A}$ is not invertible and consequently $0\in \sigma({\mathcal A}).$
\end{proposition}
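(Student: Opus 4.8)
The plan is to show that, under the hypothesis $\eta=0$, the operator $\mathcal{A}$ fails to be surjective onto $\mathcal{H}$. Since $\mathcal{A}$ is closed (being the generator of a $C_{0}$-semigroup, Proposition~\ref{lu}) and, by Corollary~\ref{C4.2}, $0$ is not an eigenvalue of $\mathcal{A}$, a failure of surjectivity already forces $0\in\sigma(\mathcal{A})$, because $0\in\rho(\mathcal{A})$ would require $\mathcal{A}$ to be a bijection from $\mathcal{D}(\mathcal{A})$ onto $\mathcal{H}$. Hence it suffices to exhibit one element $F\in\mathcal{H}$ that is not in the range of $\mathcal{A}$.

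I would take $F=(f_{1},f_{2},0,0,0,0)^{T}$, where $(f_{1},f_{2})\in\mathbb{H}_{0}^{1}$ is any nonzero pair — for instance $f_{1}(x)=\sin(\pi x/L)$ on $(-L,0)$ and $f_{2}\equiv 0$ on $(0,L)$, which belongs to $\mathbb{H}_{0}^{1}$ since it vanishes at $-L$ and at $L$ and matches at $0$. Suppose, for contradiction, that some $\mathbb{U}=(u,v,U,V,\varphi_{1},\varphi_{2})^{T}\in\mathcal{D}(\mathcal{A})$ satisfies $\mathcal{A}\mathbb{U}=F$. Comparing the first two components in \eqref{303} gives $U=f_{1}$ and $V=f_{2}$, so $U\not\equiv 0$ on $(-L,0)$. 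The fifth component, with $\eta=0$, reads $-|\xi|^{2}\varphi_{1}(x,\xi)+\mu(\xi)U(x)=0$, whence, for a.e.\ $\xi\neq 0$,
\begin{equation*}
\varphi_{1}(x,\xi)=\frac{\mu(\xi)}{|\xi|^{2}}\,U(x)=|\xi|^{\frac{2\alpha-5}{2}}\,U(x).
\end{equation*}

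The crux of the argument — and the only place where $\eta=0$ is used — is the low-frequency estimate
\begin{equation*}
\|\varphi_{1}\|_{L^{2}(\mathbb{R};L^{2}(-L,0))}^{2}=\left(\int_{\mathbb{R}}|\xi|^{2\alpha-5}\,d\xi\right)\|U\|_{L^{2}(-L,0)}^{2}=+\infty,
\end{equation*}
since $\int_{|\xi|\le 1}|\xi|^{2\alpha-5}\,d\xi=+\infty$ (because $2\alpha-5<-1$ for $0<\alpha<1$) and $\|U\|_{L^{2}(-L,0)}^{2}=\|f_{1}\|_{L^{2}(-L,0)}^{2}>0$. This contradicts $\varphi_{1}\in L^{2}(\mathbb{R};L^{2}(-L,0))$, which is required by $\mathbb{U}\in\mathcal{H}$. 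Therefore no such $\mathbb{U}$ exists, $F$ is not in the range of $\mathcal{A}$, the operator $\mathcal{A}$ is not invertible, and consequently $0\in\sigma(\mathcal{A})$. I do not expect a genuine obstacle here: the entire content is the choice of a test datum $F$ whose only nonzero component sits in the velocity slot, so that the $\varphi_{1}$-equation with $\eta=0$ is forced to produce a non-square-integrable singularity at $\xi=0$. (For $\eta>0$ the same computation yields $\varphi_{1}\sim\eta^{-1}|\xi|^{(2\alpha-1)/2}U$ near $\xi=0$, which is square-integrable, so this obstruction is genuinely special to $\eta=0$.)
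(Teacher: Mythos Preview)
Your proof is correct and follows essentially the same approach as the paper's: both take the test datum with $\sin(\pi x/L)$ in the first component, read off $U=\sin(\pi x/L)$ from $\mathcal{A}\mathbb{U}=F$, solve the fifth component with $\eta=0$ to get $\varphi_{1}(x,\xi)=|\xi|^{(2\alpha-5)/2}\sin(\pi x/L)$, and observe that this fails to lie in $L^{2}(\mathbb{R};L^{2}(-L,0))$ because $\int_{|\xi|\le 1}|\xi|^{2\alpha-5}\,d\xi=+\infty$. Your write-up is in fact more explicit than the paper's (you justify membership of $F$ in $\mathcal{H}$, spell out why non-surjectivity implies $0\in\sigma(\mathcal{A})$, and compute the divergent integral); the only quibble is the phrase ``velocity slot'' in your closing remark---the nonzero entry of $F$ sits in the displacement component of $\mathcal{H}$, though it is true that the first line of $\mathcal{A}\mathbb{U}=F$ forces the velocity $U$ to equal $f_{1}$.
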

\begin{proof}
Let $W_{0}=\left(\sin(\pi x/L),\,0,\,0,\,0,\,0,\,0\right)\in \mathcal{H}$ and we assume that there exists $\mathbb{U}_{0} = (u_{0},\,v_{0},\,U_{0},\,V_{0},\,\varphi_{1,\,0},\,\varphi_{2,\,0})\in \mathcal{D}({\mathcal A})$ such that ${\mathcal A}\mathbb{U}_{0} = \mathbb{F}_{0}.$ In this case, $\varphi_{0}(\xi) = |\xi|^{\frac{2\alpha - 5}{2}}\sin(\pi x/L).$ However, $\varphi_{1,\,0}\notin L^{2}(\mathbb{R}; L^{2}(-L,\,0))$ and $\varphi_{2,\,0}\notin L^{2}(\mathbb{R}; L^{2}(0,\,L))$ for $0 < \alpha < 1.$ 

\end{proof}

\begin{proposition}
	\label{P4.3}
	\begin{enumerate}
		\item [(a)] If $\eta = 0,$ then $i\lambda I - {\mathcal A}$ is surjective, for any $\lambda\neq 0.$
		\item [(b)] If $\eta > 0$ and $\lambda\in \mathbb{R},$ then $i\lambda I - {\mathcal A}$ is surjective. 
	\end{enumerate}
\end{proposition}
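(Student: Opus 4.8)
The plan is to prove surjectivity of $i\lambda I - \mathcal{A}$ by solving, for arbitrary $\mathbb{F} = (f_1,f_2,g_1,g_2,h_1,h_2)^T \in \mathcal{H}$, the equation $(i\lambda I - \mathcal{A})\mathbb{U} = \mathbb{F}$ for $\mathbb{U} = (u,v,U,V,\varphi_1,\varphi_2)^T \in \mathcal{D}(\mathcal{A})$. Writing the system componentwise gives $U = i\lambda u - f_1$, $V = i\lambda v - f_2$, then the wave equations $i\lambda\rho_1 U - k_1 u_{xx} + \mathfrak{C}\int_{\mathbb{R}}\mu(\xi)\varphi_1(\xi)\,d\xi = \rho_1 g_1$ and similarly for $v$, and finally $(|\xi|^2 + \eta + i\lambda)\varphi_1(\xi) = \mu(\xi)U + h_1(\xi)$, $(|\xi|^2+\eta+i\lambda)\varphi_2(\xi) = \mu(\xi)V + h_2(\xi)$. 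The strategy is to eliminate $\varphi_1,\varphi_2$ using the last two equations — here one must use that $|\xi|^2 + \eta + i\lambda \neq 0$, which holds precisely when $\eta > 0$ (case (b)) or when $\eta = 0$ and $\lambda \neq 0$ (case (a)), explaining the hypotheses — and substitute into the wave equations, reducing the whole problem to an elliptic transmission problem for $(u,v)$ alone.

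Concretely, after substituting $\varphi_j(\xi) = \frac{\mu(\xi)(i\lambda w_j - f_j) + h_j(\xi)}{|\xi|^2+\eta+i\lambda}$ (with $w_1 = u$, $w_2 = v$), the wave equations become, for $(u,v)$, a system of the form $-k_1 u_{xx} + (i\lambda\rho_1 \cdot i\lambda + \mathfrak{C}\,i\lambda\, E(\lambda,\alpha,\eta))\,u = \widetilde{g}_1$ on $(-L,0)$ and analogously on $(0,L)$, where $E(\lambda,\alpha,\eta) = \int_{\mathbb{R}}\frac{|\xi|^{2\alpha-1}}{|\xi|^2+\eta+i\lambda}\,d\xi$ is finite by Lemma~\ref{L2.4} and the right-hand sides $\widetilde{g}_j$ lie in $\mathbb{L}^2$ again by Lemma~\ref{L2.4} (the term $\int_{\mathbb{R}}\frac{\mu(\xi)h_j(\xi)}{|\xi|^2+\eta+i\lambda}\,d\xi$ is the function $H$ there). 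I would then set up the weak formulation of this elliptic transmission problem on $\mathbb{H}_0^1$: find $(u,v)\in\mathbb{H}_0^1$ such that $a((u,v),(\phi,\psi)) = \ell(\phi,\psi)$ for all $(\phi,\psi)\in\mathbb{H}_0^1$, where the sesquilinear form $a$ incorporates $k_1\langle u_x,\phi_x\rangle + k_2\langle v_x,\psi_x\rangle$ plus the zeroth-order terms, and the transmission condition $k_1 u_x(0) = k_2 v_x(0)$ is natural (built into the weak form via integration by parts). Existence and uniqueness of a weak solution follows from Lax–Milgram once coercivity is checked, and then elliptic regularity gives $(u,v)\in\mathbb{H}^2$, after which one recovers $U,V$ and $\varphi_1,\varphi_2$ and verifies directly that $\mathbb{U}\in\mathcal{D}(\mathcal{A})$.

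The main obstacle is the coercivity (or at least the invertibility) of the sesquilinear form $a$. Unlike the purely dissipative case, the zeroth-order coefficient $i\lambda\rho_j\cdot i\lambda + \mathfrak{C}\,i\lambda\,E(\lambda,\alpha,\eta) = -\lambda^2\rho_j + i\lambda\mathfrak{C}\,E(\lambda,\alpha,\eta)$ is complex and not of a fixed sign, so a naive Lax–Milgram estimate fails; for large $|\lambda|$ the $-\lambda^2$ term is badly signed. I would handle this by splitting $E = F + i\lambda G$ (as in the proof of Lemma~\ref{L2.4}), so that the imaginary part of the zeroth-order coefficient becomes $\lambda\mathfrak{C}F(\lambda,\alpha,\eta)$ with $F > 0$ — this gives a genuine damping term in the form — and then arguing by contradiction using uniqueness: if $a$ were not injective, a nonzero kernel element $(u,v)$ would, by taking the imaginary part of $a((u,v),(u,v)) = 0$, force the associated $\varphi_j$ to vanish (since $\lambda \neq 0$ or $\eta > 0$ makes $F$ strictly positive), hence $U = V = 0$, and then the real part forces $u_x = v_x = 0$, whence $u = v = 0$ by the boundary conditions; this is exactly the argument already run in Proposition~\ref{P4.1}. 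Thus $i\lambda I - \mathcal{A}$ is injective, and being a Fredholm operator of index zero (the elliptic operator plus compact perturbation), it is surjective. Alternatively, and perhaps more cleanly, I would invoke that $\mathcal{A}$ generates a contraction semigroup (Proposition~\ref{lu}), so $i\lambda \in \rho(\mathcal{A})$ as soon as $i\lambda I - \mathcal{A}$ is injective with closed range — and closed range plus density of the range follows from the Fredholm alternative for the reduced elliptic problem. For part (a) with $\eta = 0$, the case $\lambda = 0$ is genuinely excluded (consistent with Proposition~\ref{p4.2}), and everything above goes through verbatim for $\lambda \neq 0$ since then $|\xi|^2 + i\lambda \neq 0$ for all $\xi$ and $G(\lambda,\alpha,0) < \infty$ by Lemma~\ref{L2.4}.
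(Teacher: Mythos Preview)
Your proposal is correct and follows essentially the same route as the paper: write the resolvent system componentwise, eliminate $\varphi_1,\varphi_2$ via the fifth and sixth equations (which is legitimate precisely under the hypotheses of (a) and (b)), invoke Lemma~\ref{L2.4} to control the resulting integrals, and reduce everything to an elliptic transmission problem for $(u,v)$ on $\mathbb{H}_0^1$ to which Lax--Milgram is applied. You are in fact more careful than the paper on the one nontrivial point: the paper simply says ``using Lax--Milgram's Theorem'' without addressing the lack of obvious coercivity caused by the $-\lambda^2\rho_j$ term, whereas you correctly isolate this as the main obstacle and propose the standard remedy (compact perturbation of a coercive form, hence Fredholm of index zero, combined with the injectivity already established in Proposition~\ref{P4.1}); this is the right way to close the argument.
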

\begin{proof}
Given $\mathbb{F} = (f_{1},\,f_{2},\,f_{3},\,f_{4},\,f_{5},\,f_{6})^{T}\in \mathcal{H},$ we aim to show that there exists a vector $\mathbb{U}=(u,\,v,\,U,\,V,\,\varphi_{1},\,\varphi_{2})^{T}\in {\mathcal D}({\mathcal A})$ such that $(i\lambda I - {\mathcal A})\mathbb{U} = \mathbb{F}.$ That is,
\begin{align}\
	\label{31}
	\begin{cases}
		i\lambda u - U = f_{1} \iff U = i\lambda u - f_{1} \\
		i\lambda v - V = f_{2},  \iff V = i\lambda v - f_{2} \\
		i\lambda U - k_{1}u_{xx} + \mathfrak{C} \displaystyle\int_{\mathbb{R}}\mu(\xi)\varphi_{1}(\xi) d\xi = f_{3},  \\
i\lambda V - k_{2}v_{xx} + \mathfrak{C} \displaystyle\int_{\mathbb{R}}\mu(\xi)\varphi_{2}(\xi) d\xi = f_{4}, \\
		(|\xi|^{2} + \eta + i\lambda)\varphi_{1}(\xi) - \mu(\xi)U = f_{5}(\xi), \quad \forall\, \xi\in \mathbb{R},\\
		 (|\xi|^{2} + \eta + i\lambda)\varphi_{2}(\xi) - \mu(\xi)V = f_{6}(\xi), \quad \forall\, \xi\in \mathbb{R}
	\end{cases}
\end{align}
Suppose that $\lambda \neq 0$ and let $\eta\ge 0.$ Replacing \eqref{31}$_{1,\,2}$ into \eqref{31}$_{5,\,6}$ respectively we obtain 
\begin{align}
\begin{cases}
\displaystyle\varphi_{1}(\xi) = \frac{f_{5}(\xi)}{(|\xi|^{2} + \eta + i\lambda)} - \dfrac{\mu(\xi)f_{1}}{(|\xi|^{2} + \eta + i\lambda)} + \frac{i\lambda\mu(\xi)u}{(|\xi|^{2} + \eta + i\lambda)} \\
\displaystyle\varphi_{2}(\xi) = \frac{f_{6}(\xi)}{(|\xi|^{2} + \eta + i\lambda)} - \dfrac{\mu(\xi)f_{2}}{(|\xi|^{2} + \eta + i\lambda)} + \frac{i\lambda\mu(\xi)v}{(|\xi|^{2} + \eta + i\lambda)}
\end{cases}
\end{align}
From Lemma \ref{L2.4}, it follows that
\begin{align*}
\begin{cases}
\displaystyle\mathfrak{C}\int_{\mathbb{R}}\mu(\xi)\varphi_{1}(\xi)d\xi
= \mathfrak{C}\left[\int_{\mathbb{R}}\dfrac{\mu(\xi)f_{5}(\xi)d\xi}{(|\xi|^{2} + \eta + i\lambda)} + E_{1}(\lambda,\,\alpha,\,\eta)(i\lambda u - f_{1})\right] \\
\displaystyle\mathfrak{C}\int_{\mathbb{R}}\mu(\xi)\varphi_{2}(\xi)d\xi
= \mathfrak{C}\left[\int_{\mathbb{R}}\dfrac{\mu(\xi)f_{6}(\xi)d\xi}{(|\xi|^{2} + \eta + i\lambda)} + E_{2}(\lambda,\,\alpha,\,\eta)(i\lambda v - f_{2})\right]
\end{cases}
\end{align*}
Thus, from the remaining equations in the system \eqref{31}, it follows that
\begin{align*}
\begin{cases}
\displaystyle-\lambda^{2}u - i\lambda f_{1} - k_{1}u_{xx} + \mathfrak{C}E_{1}(\lambda,\,\alpha,\,\eta)(i\lambda u - f_{1}) + \mathfrak{C}\int_{\mathbb{R}}\dfrac{\mu(\xi)f_{5}(\xi)d\xi}{(|\xi|^{2} + \eta + i\lambda)} 
= f_{3} \\
\displaystyle-\lambda^{2}v - i\lambda f_{2} - k_{2}v_{xx} + \mathfrak{C}E_{2}(\lambda,\,\alpha,\,\eta)(i\lambda v - f_{2}) + \mathfrak{C}\int_{\mathbb{R}}\dfrac{\mu(\xi)f_{6}(\xi)d\xi}{(|\xi|^{2} + \eta + i\lambda)} 
= f_{4}.
\end{cases}
\end{align*}
Then
\begin{align*}
\begin{cases}
\displaystyle-\lambda^{2}u - k_{1}u_{xx} + i\lambda \mathfrak{C}E_{1}(\lambda,\,\alpha,\,\eta)u  
= i\lambda f_{1} + f_{3} + \mathfrak{C}E_{1}(\lambda,\,\alpha,\,\eta)f_{1} -  \mathfrak{C}\int_{\mathbb{R}}\dfrac{\mu(\xi)f_{5}(\xi)d\xi}{(|\xi|^{2} + \eta + i\lambda)}\\
\displaystyle-\lambda^{2}v - k_{2}v_{xx} + i\lambda\mathfrak{C}E_{2}(\lambda,\,\alpha,\,\eta)v  
= i\lambda f_{2} + f_{4} + \mathfrak{C}E_{2}(\lambda,\,\alpha,\,\eta)f_{2} - \mathfrak{C}\int_{\mathbb{R}}\dfrac{\mu(\xi)f_{6}(\xi)d\xi}{(|\xi|^{2} + \eta + i\lambda)}.
\end{cases}
\end{align*}
However
\begin{align*}
\begin{cases}
\displaystyle\left|\mathfrak{C}\int_{-L}^{0}\widetilde{U}\displaystyle\int_{\mathbb{R}}\frac{\mu(\xi)f_{5}(\xi)d\xi}{|\xi|^{2} + \eta + i\lambda}dx\right|\leq |L|\mathfrak{C}H_{1}(x,\,\lambda,\,\alpha,\,\eta)\|\widetilde{U}\|_{\mathbb{H}_{L}^{1}},\\
\displaystyle\left|\mathfrak{C}\int_{0}^{L}\widetilde{V}\displaystyle\int_{\mathbb{R}}\frac{\mu(\xi)f_{6}(\xi)d\xi}{|\xi|^{2} + \eta + i\lambda}dx\right|\leq |L|\mathfrak{C}H_{2}(x,\,\lambda,\,\alpha,\,\eta)\|\widetilde{V}\|_{\mathbb{H}_{L}^{1}}.
\end{cases}
\end{align*}
So is enough to proceed in a similar way to the approach used in the proof of Proposition 3.1 and using Lax-Milgram’s Theorem.
Finally, If $\lambda = 0$ and $\eta >0,$\ we have $U = - f_{1},$\ $\ V = - f_{2},$\ $\ \displaystyle\varphi_{1}(\xi) = \dfrac{f_{5}(\xi)}{|\xi|^{2} + \eta} - \frac{\mu(\xi)f_{1}}{|\xi|^{2} + \eta}$\ and $\ \displaystyle\varphi_{2}(\xi) = \dfrac{f_{6}(\xi)}{|\xi|^{2} + \eta} - \frac{\mu(\xi)f_{2}}{|\xi|^{2} + \eta}$\
\begin{align}
\label{3333}
\begin{cases}
-k_{1}u_{xx} + \mathfrak{C} \displaystyle\int_{\mathbb{R}}\mu(\xi)\varphi_{1}(\xi)d\xi = f_{3},  \\
-k_{2}v_{xx} + \mathfrak{C} \displaystyle\int_{\mathbb{R}}\mu(\xi)\varphi_{2}(\xi)d\xi = f_{4}.
\end{cases}
\end{align}
Since Lax-Milgram's Theorem applied to the system \eqref{3333} the result follows. \end{proof} 
\begin{corollary}
	\label{C4.4} 	
	\begin{enumerate}
		\item [(a)] If $\eta=0,$ then $i\lambda \notin \sigma({\mathcal A}),$ for any $\lambda\neq 0,$
		\item [(b)] If $\eta> 0,$ then $i\lambda \notin \sigma({\mathcal A}),$ for any $\lambda\in \mathbb{R}.$
	\end{enumerate}
\end{corollary}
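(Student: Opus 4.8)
The plan is to deduce this corollary directly from the two preceding results, with one extra functional-analytic observation. Recall from Corollary \ref{C4.2} that for every $\lambda\in\mathbb{R}$ the operator $i\lambda I-\mathcal{A}$ is injective, i.e. $i\lambda$ is never an eigenvalue of $\mathcal{A}$. On the other hand, Proposition \ref{P4.3} asserts that $i\lambda I-\mathcal{A}$ is surjective under exactly the hypotheses appearing in the statement: for every $\lambda\neq0$ when $\eta=0$, and for every $\lambda\in\mathbb{R}$ when $\eta>0$. Combining the two, under these hypotheses the map $i\lambda I-\mathcal{A}:\mathcal{D}(\mathcal{A})\to\mathcal{H}$ is a bijection.

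The only remaining point is to pass from bijectivity to membership in the resolvent set. For this I would invoke Proposition \ref{lu}: $\mathcal{A}$ is the infinitesimal generator of a $C_0$-semigroup of contractions, hence in particular $\mathcal{A}$ is a closed, densely defined operator on the Hilbert space $\mathcal{H}$, and therefore $i\lambda I-\mathcal{A}$ is closed as well. A closed linear bijection between Banach spaces has a bounded inverse, by the closed graph theorem (equivalently the bounded inverse theorem). Consequently $(i\lambda I-\mathcal{A})^{-1}$ is a bounded operator on $\mathcal{H}$, which is precisely the statement that $i\lambda$ belongs to the resolvent set of $\mathcal{A}$, i.e. $i\lambda\notin\sigma(\mathcal{A})$. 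This gives (a) for all $\lambda\neq0$ and (b) for all $\lambda\in\mathbb{R}$.

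For completeness I would add a sentence explaining why $\lambda=0$ must be excluded in part (a): by Proposition \ref{p4.2}, when $\eta=0$ the operator $\mathcal{A}$ is not invertible, so $0\in\sigma(\mathcal{A})$; thus the restriction $\lambda\neq0$ in (a) is genuinely necessary and the statement is sharp. There is essentially no obstacle in this proof: the argument is a routine assembly, and the only subtlety worth flagging is that injectivity together with surjectivity is not by itself enough to conclude $i\lambda\in\rho(\mathcal{A})$ — one also needs boundedness of the inverse, which is supplied by the closedness of $\mathcal{A}$ coming from Proposition \ref{lu}.
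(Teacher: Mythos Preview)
Your argument is correct and matches the paper's approach: the paper states this corollary without explicit proof, intending it as an immediate consequence of Corollary~\ref{C4.2} (injectivity) and Proposition~\ref{P4.3} (surjectivity). Your explicit invocation of the closedness of $\mathcal{A}$ (via Proposition~\ref{lu}) and the bounded inverse theorem to pass from bijectivity to $i\lambda\in\rho(\mathcal{A})$ is a welcome addition that the paper leaves tacit.
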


\begin{theorem}
\label{T4.5}
The C$_{0}$-semigroup $\{{\mathcal S}(t)\}_{t\ge 0}$ generated by operator ${\mathcal A}$ is strongly stable in $\mathcal{H},$ i. e.,
	$$
	\lim_{t\to +\infty}\left\|e^{t{\mathcal A}}\mathbb{U}_{0}\right\|_{\mathcal{H}} = 0, \quad\forall \; \mathbb{U}_{0}\in \mathcal{H}.
	$$
\end{theorem}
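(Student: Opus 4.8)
The plan is to deduce this directly from the Arendt--Batty theorem (Theorem \ref{Arendt}), for which most of the work has already been done. Since $\mathcal{H}$ is a Hilbert space it is reflexive, and by Proposition \ref{lu} the operator $\mathcal{A}$ generates a $C_0$-semigroup of contractions, so the structural hypotheses of Theorem \ref{Arendt} are satisfied; it remains only to check the two spectral conditions, namely that $\mathcal{A}$ has no purely imaginary eigenvalue and that $\sigma(\mathcal{A})\cap i\mathbb{R}$ is countable.

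First I would dispose of condition (i): Corollary \ref{C4.2} (which records the outcome of Proposition \ref{P4.1}) already states that $i\lambda$ is not an eigenvalue of $\mathcal{A}$ for any $\lambda\in\mathbb{R}$, including $\lambda=0$, so (i) holds with no further argument. Next I would verify condition (ii) by splitting into the two cases according to the value of $\eta$. If $\eta>0$, then Corollary \ref{C4.4}(b) gives $i\lambda\notin\sigma(\mathcal{A})$ for every $\lambda\in\mathbb{R}$, so $\sigma(\mathcal{A})\cap i\mathbb{R}=\emptyset$, which is certainly countable. If $\eta=0$, then Corollary \ref{C4.4}(a) gives $i\lambda\notin\sigma(\mathcal{A})$ for every $\lambda\neq 0$, while Proposition \ref{p4.2} shows $0\in\sigma(\mathcal{A})$; hence in this case $\sigma(\mathcal{A})\cap i\mathbb{R}=\{0\}$, a finite and therefore countable set. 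In both cases condition (ii) is met.

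Having checked (i) and (ii), Theorem \ref{Arendt} applies and yields that the semigroup $\{\mathcal{S}(t)\}_{t\ge 0}=\{e^{t\mathcal{A}}\}_{t\ge 0}$ is strongly stable, i.e. $\lim_{t\to+\infty}\|e^{t\mathcal{A}}\mathbb{U}_0\|_{\mathcal{H}}=0$ for all $\mathbb{U}_0\in\mathcal{H}$, which is the assertion. There is no real obstacle left at this stage: all the difficulty sits in the imaginary-axis spectral analysis already performed in Propositions \ref{P4.1}, \ref{p4.2} and \ref{P4.3}. The one point worth a sentence of care is that, when $\eta=0$, the value $0$ genuinely belongs to $\sigma(\mathcal{A})$; this is harmless because Arendt--Batty requires only that $\sigma(\mathcal{A})\cap i\mathbb{R}$ be countable, not empty, and the singleton $\{0\}$ is countable while still not being an eigenvalue.
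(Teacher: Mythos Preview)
Your proposal is correct and follows essentially the same approach as the paper: verify condition (i) via Corollary \ref{C4.2}, verify condition (ii) by splitting into the cases $\eta>0$ (using Corollary \ref{C4.4}(b) to get $\sigma(\mathcal{A})\cap i\mathbb{R}=\emptyset$) and $\eta=0$ (using Corollary \ref{C4.4}(a) together with Proposition \ref{p4.2} to get $\sigma(\mathcal{A})\cap i\mathbb{R}=\{0\}$), and then invoke Theorem \ref{Arendt}. Your write-up is in fact slightly more explicit than the paper's, since you spell out the reflexivity of $\mathcal{H}$ and the boundedness of the semigroup.
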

\noindent
\proof From the Corollary $\ref{C4.2},$ it follows that the operator ${\mathcal A}$ does not have purely imaginary eigenvalues. However, if $\eta = 0,$ Proposition $\ref{p4.2}$ and item (a) of the Corollary $\ref{C4.4},$ imply that $\sigma({\mathcal A}) \cap i\mathbb{R} = \{0\}.$ In the case of $\eta > 0,$ using item (b) of the corollary $\ref{C4.4},$ we conclude that $\sigma({\mathcal A})\cap i\mathbb{R} = \emptyset.$ Therefore, in both cases, we can apply Arendt and Batty's Theorem, leading to the desired result. \qquad $\square$

\section{Polynomial stability for ${\pmb{\color{black}{\eta\geq 0}}}$}
\setcounter{equation}{0}

To obtain the rate of stability, for $\eta=0$, we need the following result (see \cite{batty},\ Theorem 8.4).
\begin{theorem}
\label{thm5.1}
\cite{batty}. Let ${\mathcal S}(t)$ be a bounded C$_{0}$-semigroup on a Hilbert space ${\mathcal H}$ with generator ${\mathcal A}.$ Assume that $\sigma({\mathcal A})\cap i\mathbb{R} = \{0\}$ and that there exist $\beta\geq 1,$ $\gamma>0$ such that 
\end{theorem}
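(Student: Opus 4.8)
The plan is to derive the decay estimate from the resolvent hypotheses by the frequency-domain (Laplace/Fourier transform) method of Borichev and Tomilov, the polynomial analogue of the Gearhart--Pr\"uss theorem, which on a Hilbert space delivers the \emph{sharp} power with no logarithmic loss. Write $R(\lambda):=(\lambda I-{\mathcal A})^{-1}$. The first thing I would do is upgrade the two one-sided hypotheses on $R(is)$ into a single global description of $\|R(is)\|_{{\mathcal L}({\mathcal H})}$ on $i\mathbb{R}\setminus\{0\}$: it is $O(|s|^{\beta})$ as $|s|\to\infty$, it is $O(|s|^{-\gamma})$ as $s\to 0$ (the singularity forced by $0\in\sigma({\mathcal A})$), and it is uniformly bounded on every compact subset of $\mathbb{R}\setminus\{0\}$, the last point following from continuity of $s\mapsto R(is)$, the assumption $\sigma({\mathcal A})\cap i\mathbb{R}=\{0\}$, and compactness. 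Since $0\in\sigma({\mathcal A})$, the inverse ${\mathcal A}^{-1}$ is unbounded, so everything must be phrased through the bounded operator $(I-{\mathcal A})^{-1}=R(1)$, that is, for initial data in ${\mathcal D}({\mathcal A})$; the low-frequency singularity will only slow the resulting rate down, not force additional regularity.

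The main steps are then the following. \textbf{(1)} Fix $x\in{\mathcal H}$ and study the regularized orbit $v(t):={\mathcal S}(t)R(1)x$, which is bounded, is $C^{1}$ with bounded derivative, and has Laplace transform $\widehat v(\lambda)=R(\lambda)R(1)x$ on $\mathbb{C}_{+}$, continuing holomorphically to $\overline{\mathbb{C}_{+}}\setminus\{0\}$. The resolvent identity $R(is)R(1)=\bigl(R(is)-R(1)\bigr)/(1-is)$ gives $\|\widehat v(is)\|_{\mathcal H}\lesssim|s|^{\beta-1}\|x\|$ at high frequency and $\lesssim|s|^{-\gamma}\|x\|$ near $s=0$: the regularizing factor $R(1)$ gains a power $|s|^{-1}$ at infinity but nothing at the origin, which is where the extra work lies. \textbf{(2)} Run the core Borichev--Tomilov argument, which on the Hilbert space ${\mathcal H}$ rests on the Plancherel identity $\int_{\mathbb{R}}\|\widehat v(\epsilon+is)\|_{\mathcal H}^{2}\,ds=2\pi\int_{0}^{\infty}e^{-2\epsilon t}\|v(t)\|_{\mathcal H}^{2}\,dt$ on the vertical lines $\operatorname{Re}\lambda=\epsilon>0$: for a fixed large $t$, split the frequency $s$ into a low band $|s|\le\delta(t)$, a middle band $\delta(t)\le|s|\le\Lambda(t)$, and a high tail $|s|\ge\Lambda(t)$, and use this identity together with the bounds from step (1) to let $\epsilon\to 0^{+}$ under control; the high tail is absorbed by the $|s|^{\beta-1}$ decay of $\widehat v(is)$ (inserting one further factor $R(1)$ if needed), and the low band by a softer estimate using the exact order $|s|^{-\gamma}$ of the singularity and the boundedness of $v$. \textbf{(3)} Optimize the cut-offs $\delta=\delta(t)$ and $\Lambda=\Lambda(t)$ as powers of $t$ to balance the contributions: the high-frequency part yields a $t^{-1/\beta}$-type bound and the origin a $t^{-1/\gamma}$-type bound, and one obtains $\|{\mathcal S}(t)(I-{\mathcal A})^{-1}\|_{{\mathcal L}({\mathcal H})}$ decaying at the polynomial rate asserted, namely the slower of these two. \textbf{(4)} Finally, re-express this for $\mathbb{U}_{0}\in{\mathcal D}({\mathcal A})$ through $\|{\mathcal S}(t)\mathbb{U}_{0}\|_{\mathcal H}\le\|{\mathcal S}(t)(I-{\mathcal A})^{-1}\|_{{\mathcal L}({\mathcal H})}\,\|\mathbb{U}_{0}\|_{{\mathcal D}({\mathcal A})}$.

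The main obstacle is the behaviour at the origin. The textbook Borichev--Tomilov theorem presupposes $i\mathbb{R}\subset\rho({\mathcal A})$, which fails here because $\eta=0$ puts $0$ in $\sigma({\mathcal A})$; one therefore has to quantify precisely how the singularity $\|R(is)\|\lesssim|s|^{-\gamma}$ at $s=0$ propagates into the time-domain estimate --- isolating the low-frequency part with a smooth cut-off and comparing $R(is)$ near $0$ with a model operator --- and to check that it costs only the slower rate $t^{-1/\gamma}$ rather than destroying the decay altogether. The second delicate point is making the exponent \emph{sharp}, i.e.\ removing the parasitic $(\log t)^{1/\beta}$ that the same scheme produces on a general Banach space; eliminating that logarithm is exactly what the $L^{2}$/Plancherel identity on the Hilbert space ${\mathcal H}$ is for. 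Everything else --- the global resolvent description, the Laplace-transform bookkeeping, and the final optimization in $t$ --- is routine.
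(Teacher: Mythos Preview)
The paper does not prove this theorem; it is quoted as Theorem~8.4 of Batty--Chill--Tomilov \cite{batty} and invoked as a ready-made tool to derive Proposition~\ref{prop5.2}. There is therefore no proof in the paper to compare your sketch against.

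That said, one concrete point in your outline would fail if you tried to carry it out. The stated conclusion is for $z\in{\mathcal D}({\mathcal A})\cap{\mathcal R}({\mathcal A})$, not merely for $z\in{\mathcal D}({\mathcal A})$, and this range restriction is essential when $0\in\sigma({\mathcal A})$. Your regularized orbit $v(t)={\mathcal S}(t)R(1)x$ sweeps out all of ${\mathcal D}({\mathcal A})$ as $x$ ranges over ${\mathcal H}$, so the estimate you are trying to prove in step~(4) is strictly stronger than the theorem and is in general false: without the range condition there is no mechanism forcing the low-frequency part to decay. Concretely, the way ${\mathcal R}({\mathcal A})$ enters is that for $z={\mathcal A}w$ one has $R(is)z=isR(is)w-w$, which gains a factor $|s|$ against the singularity $\|R(is)\|\lesssim|s|^{-\gamma}$ at $s=0$; your ``softer estimate using the exact order $|s|^{-\gamma}$ of the singularity and the boundedness of $v$'' in step~(2) cannot by itself produce decay of the low band. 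Apart from this, the Borichev--Tomilov frequency-domain philosophy you describe is the right circle of ideas, though the actual argument in \cite{batty} is organized around a single admissible weight function $M$ capturing both regimes, rather than an explicit two-band cut-off optimization.
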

\begin{align*}
\|(i\lambda I - {\mathcal A})^{-1}\|_{{\mathcal L}(X)} \leq 
\begin{cases}
{\mathcal O}(|\lambda|^{-\beta}),\ \lambda\rightarrow 0 \\
{\mathcal O}(|\lambda|^{\gamma}),\ \lambda\rightarrow +\infty.
\end{cases}
\end{align*}
Then there exist a constant $C>0$ such that
\begin{align*}
\|{\mathcal S}(t)z\|_{\mathcal H} \leq \frac{C}{t^{\frac{1}{\max\{\beta,\,\gamma\}}}}\|z\|_{{\mathcal D}({\mathcal H})},\quad \forall\,t>0,\ z\in {\mathcal D}({\mathcal A})\cap {\mathcal R}({\mathcal A}),
\end{align*}
where ${\mathcal D}({\mathcal A})$ is the domain of ${\mathcal A}$ and ${\mathcal R}({\mathcal A})$ is the range of ${\mathcal A}.$ \\
Using the Theorem 5.1, a simple adaptation of the proof of \cite{bena} (Theorems 5.8 and 5.9) (see also \cite{kais}) lead to the following stability result:
\begin{proposition}
\label{prop5.2}
If $\eta = 0,$ then there exist a $C$ such that 
\begin{align}
\|e^{t{\mathcal A}}\mathbb{U}_{0}\|_{\mathcal H} \leq \frac{C}{\sqrt{t}}\|\mathbb{U}_{0}\|_{\mathcal H},\quad \forall\,t>0,\ \mathbb{U}_{0}\in {\mathcal D}({\mathcal A})\cap R({\mathcal A}),
\end{align}
with ${\mathcal R}({\mathcal A})$ is the range of ${\mathcal A}.$
\end{proposition}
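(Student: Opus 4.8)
The plan is to derive the estimate from the abstract decay theorem, Theorem \ref{thm5.1}, applied to the generator ${\mathcal A}$ with $\eta=0$. That theorem needs two inputs: (i) $\sigma({\mathcal A})\cap i\mathbb{R}=\{0\}$, and (ii) resolvent bounds $\|(i\lambda I-{\mathcal A})^{-1}\|_{{\mathcal L}({\mathcal H})}={\mathcal O}(|\lambda|^{-\beta})$ as $\lambda\to 0$ and ${\mathcal O}(|\lambda|^{\gamma})$ as $\lambda\to+\infty$, for some $\beta\ge 1$ and $\gamma>0$; it then yields decay at the rate $t^{-1/\max\{\beta,\gamma\}}$ on ${\mathcal D}({\mathcal A})\cap{\mathcal R}({\mathcal A})$. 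Input (i) is already available: Proposition \ref{p4.2} gives $0\in\sigma({\mathcal A})$ and Corollary \ref{C4.4}(a) gives $i\lambda\notin\sigma({\mathcal A})$ for all $\lambda\ne 0$, so $\sigma({\mathcal A})\cap i\mathbb{R}=\{0\}$, exactly as used in the proof of Theorem \ref{T4.5}. Note also that, since $0\in\sigma({\mathcal A})$ when $\eta=0$, one cannot hope for decay on all of ${\mathcal H}$; the restriction to ${\mathcal D}({\mathcal A})\cap{\mathcal R}({\mathcal A})$ in the statement is precisely what Theorem \ref{thm5.1} is designed to accommodate. So the whole task reduces to establishing (ii) with $\max\{\beta,\gamma\}=2$.

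I would prove both resolvent bounds by the usual contradiction argument, adapting \cite{bena} (Theorems 5.8--5.9; see also \cite{kais}). If the bound near $0$ (resp. near $\infty$) failed for the target exponent, there would exist $\lambda_n\to 0$ (resp. $\lambda_n\to+\infty$) and unit vectors $\mathbb{U}_n=(u_n,v_n,U_n,V_n,\varphi_{1n},\varphi_{2n})^T\in{\mathcal D}({\mathcal A})$ with $\mathbb{F}_n:=(i\lambda_n I-{\mathcal A})\mathbb{U}_n$ satisfying $|\lambda_n|^{\beta}\|\mathbb{F}_n\|_{{\mathcal H}}\to 0$ (resp. $|\lambda_n|^{-\gamma}\|\mathbb{F}_n\|_{{\mathcal H}}\to 0$). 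Taking the real part of $\langle\mathbb{F}_n,\mathbb{U}_n\rangle_{{\mathcal H}}$ and using the dissipation identity \eqref{314} gives
\begin{align*}
\mathfrak{C}\int_{\mathbb{R}}|\xi|^{2}\Big(\|\varphi_{1n}(\xi)\|_{L^{2}(-L,0)}^{2}+\|\varphi_{2n}(\xi)\|_{L^{2}(0,L)}^{2}\Big)\,d\xi=\mathrm{Re}\,\langle\mathbb{F}_n,\mathbb{U}_n\rangle_{{\mathcal H}}\longrightarrow 0 .
\end{align*}
Then, writing the last two lines of $(i\lambda_n I-{\mathcal A})\mathbb{U}_n=\mathbb{F}_n$ as $\varphi_{1n}(\xi)=(|\xi|^{2}+i\lambda_n)^{-1}(\mu(\xi)U_n+f_{5n}(\xi))$ (and analogously for $\varphi_{2n}$, $V_n$, $f_{6n}$) and substituting into the coupling terms $\mathfrak{C}\int_{\mathbb{R}}\mu(\xi)\varphi_{jn}(\xi)\,d\xi$, Lemma \ref{L2.4} shows these equal $\mathfrak{C}E(\lambda_n,\alpha,0)U_n$ (resp. $\mathfrak{C}E(\lambda_n,\alpha,0)V_n$) up to a remainder controlled by $\mathbb{F}_n$. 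Feeding this back into the second-order equations for $u_n$ and $v_n$, testing against $u_n$ and $v_n$ and summing --- the transmission contributions at $x=0$ cancel because $k_1(u_n)_x(0)=k_2(v_n)_x(0)$ and $u_n(0)=v_n(0)$, exactly as in Proposition \ref{P4.3} --- one extracts in turn that $U_n,V_n\to 0$ in $\mathbb{L}^{2}$, that $(u_n)_x,(v_n)_x\to 0$ in $L^{2}$, and that $\|\varphi_{1n}\|_{L^{2}(\mathbb{R};\mathbb{L}^{2})},\|\varphi_{2n}\|_{L^{2}(\mathbb{R};\mathbb{L}^{2})}\to 0$, contradicting $\|\mathbb{U}_n\|_{{\mathcal H}}=1$. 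The admissible exponents $\beta,\gamma$ are dictated by the powers of $|\lambda_n|$ needed to keep this chain balanced; tracking them --- using the scaling behaviour of the integrals in Lemma \ref{L2.4} (with $E(\lambda,\alpha,0)$ of order $|\lambda|^{\alpha-1}$ and $\int_{\mathbb{R}}|\xi|^{2\alpha-1}(\lambda^{2}+|\xi|^{4})^{-1}d\xi$ of order $|\lambda|^{\alpha-2}$ as $|\lambda|\to 0$) --- produces a pair with $\max\{\beta,\gamma\}=2$, and Theorem \ref{thm5.1} then gives the rate $t^{-1/2}$.

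The main obstacle is the quantitative bookkeeping in this contradiction argument, and in particular the low-frequency analysis: because $\eta=0$ forces $0\in\sigma({\mathcal A})$, the resolvent genuinely blows up as $\lambda\to 0$, and one must show it does so no faster than is compatible with $\beta\le 2$; the obstruction is the $|\xi|^{-2}$ singularity carried by $\varphi_{jn}(\xi)$ at $\lambda_n=0$ --- the same singularity that makes ${\mathcal A}$ non-surjective in Proposition \ref{p4.2} --- and the estimate has to quantify how it is smoothed out for $\lambda_n\ne 0$. Away from the origin the high-frequency bound is obtained in the same way, the weakness of the fractional coupling for large $|\xi|$ being compensated through the dissipation integral above; and the elliptic estimates near the transmission point $x=0$, together with the Lax--Milgram step, are exactly those already used in the proof of Proposition \ref{P4.3}, so no new ideas are required there beyond the standard adaptation of \cite{bena,kais}.
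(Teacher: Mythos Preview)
Your proposal is correct and follows essentially the same approach as the paper: the paper's own ``proof'' is just the sentence ``Using the Theorem~5.1, a simple adaptation of the proof of \cite{bena} (Theorems~5.8 and~5.9) (see also \cite{kais}) lead to the following stability result'', i.e.\ it invokes Theorem~\ref{thm5.1} together with the resolvent estimates from \cite{bena,kais}, exactly as you do. You have simply written out more of what that adaptation entails (the dissipation identity, Lemma~\ref{L2.4}, the contradiction scheme), but the strategy and the cited inputs are identical.
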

\label{prop5.3}
{\bf Polynomial stability for $\eta > 0.$} The semigroup  $(e^{t\mathcal{A}_0})_{t \geq 0}$ is exponential stable, i.e., there exist $\delta,\,C>0,$ such that:
\begin{align*}
\|e^{t{\mathcal A}_{0}}\|_{{\mathcal L}(\mathcal{H}_0)} \leq Ce^{-\delta t},\quad \forall\,t\geq 0.
\end{align*}
Applying (\cite{kaisbis},\ Corollary 2.6.1) we obtain the following polynomial decay result for the system \eqref{104} and the augmented system \eqref{209}.
\begin{proposition}
If $\eta > 0,$ then the semigroup $(e^{t{\mathcal A}})_{t\geq 0}$ is polynomialy stable, namely there exists a constant $C>0$ such that
\begin{align*}
& \|e^{t{\mathcal A}}(u_{0},\,v_{0},\,u_{1},\,v_{1},\,\varphi_{0,\,1},\,\varphi_{0,\,2})\|_{\mathcal H} \\
& \qquad \frac{C}{(1 + t)^{\frac{1}{1 - \alpha}}}\|(u_{0},\,v_{0},\,u_{1},\,v_{1},\,\varphi_{0,\,1},\,\varphi_{0,\,2})\|_{{\mathcal D}({\mathcal A})},\quad \forall\,t\geq 0,
\end{align*}
for all initial data $(u_{0},\,v_{0},\,u_{1},\,v_{1},\,\varphi_{0,\,1},\,\varphi_{0,\,2})\in {\mathcal D}({\mathcal A}).$ In particular, the energy of the strong solution of \eqref{104} and \eqref{209} satisfies the following estimate:
\begin{align*}
E(t) \leq \frac{C}{(1 + t)^{\frac{2}{1 - \alpha}}}\|(u_{0},\,v_{0},\,u_{1},\,v_{1},\,0,\,0)\|^2_{{\mathcal D}({\mathcal A})},\quad \forall \,t > 0. 
\end{align*}
\end{proposition}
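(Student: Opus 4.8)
The strategy is to deduce the statement from two ingredients: the exponential stability of the auxiliary semigroup $(e^{t\mathcal{A}_{0}})_{t\geq 0}$ attached to the classically damped problem \eqref{606}, and the abstract comparison result of \cite{kaisbis} (Corollary 2.6.1), which turns exponential stability of a second-order system damped by $BB^{*}Z_{\bf t}$ into polynomial stability of the same system damped by the fractional term $BB^{*}\partial_{\bf t}^{\alpha,\eta}Z$. In our case, written in the abstract form \eqref{abstract}, one has $B=B^{*}=I_{H}$ and $A$ the self-adjoint, strictly positive operator \eqref{304b}; hence the structural hypothesis of \cite{kaisbis} --- boundedness on $i\mathbb{R}\setminus\{0\}$ of the transfer function $\lambda\mapsto\lambda B^{*}(\lambda^{2}I+A)^{-1}B$ --- is automatic, and everything reduces to establishing the exponential decay of $(e^{t\mathcal{A}_{0}})_{t\geq 0}$ on $\mathcal{H}_{0}$.

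For that I would apply the Gearhart--Pr\"uss frequency-domain criterion, i.e. verify $i\mathbb{R}\subset\rho(\mathcal{A}_{0})$ and $\sup_{\lambda\in\mathbb{R}}\|(i\lambda I-\mathcal{A}_{0})^{-1}\|_{\mathcal{L}(\mathcal{H}_{0})}<+\infty$. The first point follows by repeating, in the simpler situation where the damping terms $U,V$ act directly on all of $(-L,0)\cup(0,L)$ rather than through an integral kernel, the injectivity and surjectivity arguments of Propositions \ref{P4.1} and \ref{P4.3}. For the uniform resolvent bound I would argue by contradiction: if it fails there are $\lambda_{n}\in\mathbb{R}$ with $|\lambda_{n}|\to+\infty$ and $\mathbb{U}_{n}=(u_{n},v_{n},U_{n},V_{n})\in\mathcal{D}(\mathcal{A}_{0})$ with $\|\mathbb{U}_{n}\|_{\mathcal{H}_{0}}=1$ and $(i\lambda_{n}I-\mathcal{A}_{0})\mathbb{U}_{n}=:\mathbb{F}_{n}\to 0$ in $\mathcal{H}_{0}$; taking the real part of $\langle\mathbb{F}_{n},\mathbb{U}_{n}\rangle_{\mathcal{H}_{0}}$ yields the dissipation estimate $\|U_{n}\|_{\mathbb{L}^{2}}+\|V_{n}\|_{\mathbb{L}^{2}}\to 0$, and inserting this into the two wave equations and using the Dirichlet conditions \eqref{102} and transmission conditions \eqref{103} through a standard multiplier estimate forces $\|u_{n,x}\|_{L^{2}(-L,0)}+\|v_{n,x}\|_{L^{2}(0,L)}\to 0$, contradicting $\|\mathbb{U}_{n}\|_{\mathcal{H}_{0}}=1$. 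This gives $\delta,C>0$ with $\|e^{t\mathcal{A}_{0}}\|_{\mathcal{L}(\mathcal{H}_{0})}\leq Ce^{-\delta t}$ for all $t\geq 0$.

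Granting this, \cite{kaisbis} (Corollary 2.6.1) applies verbatim to \eqref{abstract}: the mechanism inside the corollary realizes \eqref{209} as the augmented model, expresses $(i\lambda I-\mathcal{A})^{-1}$ through $(i\lambda I-\mathcal{A}_{0})^{-1}$ and the kernel $E(\lambda,\alpha,\eta)$ controlled in Lemma \ref{L2.4}, uses $i\mathbb{R}\subset\rho(\mathcal{A})$ (Corollary \ref{C4.4}(b), together with $0\in\rho(\mathcal{A})$ since $\eta>0$) and the resolvent growth $\|(i\lambda I-\mathcal{A})^{-1}\|_{\mathcal{L}(\mathcal{H})}=\mathcal{O}(|\lambda|^{1-\alpha})$ as $|\lambda|\to+\infty$, and then invokes the Borichev--Tomilov theorem to obtain $\|e^{t\mathcal{A}}\mathbb{U}_{0}\|_{\mathcal{H}}\leq C(1+t)^{-1/(1-\alpha)}\|\mathbb{U}_{0}\|_{\mathcal{D}(\mathcal{A})}$ for every $\mathbb{U}_{0}\in\mathcal{D}(\mathcal{A})$. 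Finally, since by \eqref{213} we have $E(t)=\tfrac12\|e^{t\mathcal{A}}\mathbb{U}_{0}\|_{\mathcal{H}}^{2}$, squaring the preceding bound and relabelling the constant gives $E(t)\leq C(1+t)^{-2/(1-\alpha)}\|\mathbb{U}_{0}\|_{\mathcal{D}(\mathcal{A})}^{2}$. The one genuinely technical point, and the step I expect to cost the most work, is the high-frequency uniform resolvent estimate for $\mathcal{A}_{0}$ --- in particular carrying the interface relations $u_{n}(0)=v_{n}(0)$ and $k_{1}u_{n,x}(0)=k_{2}v_{n,x}(0)$ cleanly through the multiplier computation; everything downstream is a mechanical application of the cited abstract corollary.
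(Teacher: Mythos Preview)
Your approach is exactly the paper's: establish exponential stability of the auxiliary semigroup $(e^{t\mathcal{A}_{0}})_{t\geq 0}$ for the classically damped problem \eqref{606}, and then invoke Corollary~2.6.1 of \cite{kaisbis} to transfer this to the polynomial rate $(1+t)^{-1/(1-\alpha)}$ for the augmented system. In fact the paper is terser than you are---it simply asserts the exponential stability of $(e^{t\mathcal{A}_{0}})_{t\geq 0}$ without proof and cites the corollary---so your Gearhart--Pr\"uss sketch supplies detail the paper omits.
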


\section{Numerical Approximation}

In this section we will verify numerically the polynomial rate of decay obtained in the previous 
section. 

\subsection{Finite Volume Approximation}
We consider the finite volume method (FVM) for the spatial discretization of the variables $u=u(x,t)$ and $v=v(x,t)$, based on a finite difference discretization of the flux \cite{Eymard}. In this sense, let $(-L,L)$ be a uniform discretization of the interval $(-L,L)$ in small $2J+1$ control volumes 
$K_j=(x_{j-\frac{1}{2}},x_{j+\frac{1}{2}})$,
with $x_{j+\frac{1}{2}}=j\delta x$, $\delta x = \dfrac{L}{J}$, 
$j=-J,\ldots,J$.
The unknowns $u(x,t)$ and $v(x,t)$, are approximated by $u_j(t)$ and $v_j(t)$ respectively in the control volume $K_j$. In turn, the transmission condition \eqref{103} implies $u_0(t)=v_0(t)$.
Given the uniformity of the mesh, the diffusion term in the respective domains $(-L,0)$ and $(0,L)$ are approximated simply as:
\begin{align}
    &k_1u_{xx}\approx 
    \frac{1}{\delta x}\left(k_1\frac{u_{j+1}-u_j}{\delta x} - k_1\frac{u_{j+1}-u_j}{\delta x}\right)=
    k_1\frac{u_{j-1}-2u_j+u_j}{\delta x^2},
    \qquad j=-J,\ldots,-1
    \label{diff1}
    \\
    &k_2v_{xx}\approx 
    \frac{1}{\delta x}\left(k_2\frac{v_{j+1}-v_j}{\delta x} - k_2\frac{v_{j+1}-v_j}{\delta x}\right)=
    k_2\frac{v_{j-1}-2v_j+v_j}{\delta x^2},
    \qquad j=1,\ldots,J.
    \label{diff2}
    \end{align}
 On the other hand, assuming that  $u_0$ and $v_0$ are constant in $(x_{-\frac{1}{2}},0)$ 
 and  $(0,x_{\frac{1}{2}})$ respectively, 
 then integrating the equation \eqref{105}
 in the volume $K_0$,
and taking into account the transmission condition
\eqref{103}$_2$
 we obtain:
$$
\delta x\left(\frac{\rho_1}{2}\ddot{u}_0 + \frac{\rho_2}{2}\ddot{v}_0\right)
- \left(k_2\frac{v_{1}-v_{0}}{\delta x} - k_1\frac{u_{0}-u_{-1}}{\delta x}\right)
+\partial_{\alpha,\eta} \frac{u_0+v_0}{2}
=0
$$
 Then, taking into additional consideration the fact that $u_0=v_0$, we have that
\begin{equation}
  \frac{\rho_1+\rho_2}{2}\ddot{u}_0
  -
    \frac{k_1 u_{-1}-(k_1+k_2)u_0+k_2v_1}{\delta x^2}  
+\partial_{\alpha,\eta}u_0=0.    
\label{transm}
\end{equation}
\subsection{Linear equations of Motion}
    Let the vector 
$\mathbf{U}=[\mathbf{u}(t), \mathbf{v}(t)]^\top
=[u_{-J}(t),\ldots,u_0(t)=v_0(t),\ldots, v_J(t)]^\top$ is
an 
approximation of $[u,v]^\top$
in $\mathbb{R}^{2J+1}$.
Considering \eqref{diff1}, \eqref{diff2} and \eqref{transm}, we have the following system of equations of motion
\begin{equation}
	\label{LEM}
	\mathbf{M}\ddot{\mathbf{U}}(t)+
	\mathbf{K}{\mathbf{U}}(t)+
	\mathbf{C}\overset{\alpha,\eta}{\mathbf{U}}(t)
	=0
\end{equation}
where $\mathbf{M}$ is the $2J+1$-diagonal densities matrix
given by:
$$
\mathbf{M}=
\begin{pmatrix}
\rho_1\mathbf{I}_{J\times J} & \mathbf{0}_J& \mathbf{O}_{J\times J}\\
 \mathbf{0}_J^\top & \frac{\rho_1+\rho_2}{2} &  \mathbf{0}_J^\top \\
 \mathbf{O}_{J\times J} & \mathbf{0}_J & \rho_2\mathbf{I}_{J\times J}
\end{pmatrix},
$$
with $\mathbf{I}_{J\times J}$ and $\mathbf{O}_{J\times J}$ the identity and null matrices respectively, and $\mathbf{0}_J$ the column vector of $\mathbb{R}^J$; 
The stiffness matrix $\mathbf{K}$ is given by
$$
\mathbf{K}=\frac{1}{\delta x^2}
\begin{pmatrix}
-k_1 \mathbf{D}^2 & \begin{bmatrix}
    \mathbf{0}_{J-1} \\ -k_1 
\end{bmatrix}   
&
\mathbf{O}_{J\times J}\\
\begin{bmatrix}
    \mathbf{0}_{J-1}^\top & -k_1 &
\end{bmatrix}
&
k_1+k_2 &
\begin{bmatrix}
   -k_2 & \mathbf{0}_{J-1}^\top 
\end{bmatrix}   
\\
\mathbf{O}_{J\times J} & 
\begin{bmatrix}
   -k_2 \\ \mathbf{0}_{J-1} 
\end{bmatrix}   
& 
-k_2 \mathbf{D}^2
\end{pmatrix},
\qquad
\mathbf{D}^2=
\begin{pmatrix}
    -2 & 1 & \dots & 0 \\
    1 & -2 & 1 & \dots \\
  \vdots  & \ddots & \ddots &  \ddots \\
0 & \dots & 1 & -2
\end{pmatrix}
$$
Last but not least
$\mathbf{C}\overset{\alpha,\eta}{\mathbf{U}}(t)= D^{\alpha,\eta}{\mathbf{U}}(t)$
is the generalized Caputo fractional derivative defined in \eqref{202}. 
However, in order for the computed energy to be numerically decreasing (without oscillations) in the case of dissipation with fractional derivatives, we will use the formula \eqref{208}, which in this case becomes:
\begin{equation}
\label{623}
D^{\alpha,\eta}{\mathbf{U}}(t) =  \mathfrak{C}\int_{\mathbb{R}}\mu(\xi)
\mathbf{\Phi}(t,\,\xi)d\xi.
\end{equation}
where $\mathbf{\Phi}
=[\varphi_{-J}(t,\xi),\ldots,\varphi_J(t,\xi)]^\top
\in \mathbb{R}^{2J+1}$ is
an 
approximation of the
solutions of \eqref{209}$_3$ and \eqref{209}$_4$.
That is, by theorem \ref{theorem51}, the vector 
$\mathbf{\Phi}$ is a solution of the following system
\begin{align}
\label{624}& \mathbf{\Phi}_{t}(t,\,\xi) + |\xi|^{2}\mathbf{\Phi}(t,\,\xi) =
\mu(\xi)\mathbf{U}(t),\quad \xi\in\mathbb{R},\quad t > 0, \\
\label{625}& \mathbf{\Phi}(0,\,\xi) = 0.
\end{align}

\subsection{Time discretization}

In order to preserve the energy with a second order scheme in time, we choose a $\beta$-Newmark scheme for $w$.
The method  consists of updating
the displacement, velocity and acceleration vectors
 at the current time $t^n=n\delta t$ to the time $t^{n+1} = (n+1)\delta t$, a small time interval
 $\delta t$
later.
The Newmark algorithm \cite{Newmark}
is based on a set of two relations expressing the forward displacement
$\mathbf{U}^{n+1}$
 and velocity
$\dot{\mathbf{U}}^{n+1}$  in terms of their
current values and the forward and current values of the acceleration:
\begin{eqnarray}
\dot{\mathbf{U}}^{n+1} &=& \dot{\mathbf{U}}^{n} + (1-\widetilde{\gamma})\delta t\,\ddot{\mathbf{U}}^{n} + \widetilde{\gamma}\delta t\,\ddot{\mathbf{U}}^{n+1},\label{702}\\
{\mathbf{U}}^{n+1} &=& {\mathbf{U}}^{n} + \delta t \dot{\mathbf{U}}^{n} + \left(\frac{1}{2}-\widetilde{\beta}\right)\delta t^ 2\,
\ddot{\mathbf{U}}^{n} + \widetilde{\beta}\delta t^2\,\ddot{\mathbf{U}}^{n+1},\label{703}
\end{eqnarray}
where $\widetilde{\beta}$ and $\widetilde{\gamma}$ are parameters of the methods that will be fixed later.
Replacing \eqref{702}--\eqref{703} in the equation of motion \eqref{LEM}, we obtain
\begin{equation}
      \left(
    \mathbf{M}
    +  \widetilde{\beta}\delta t^2\,
    \mathbf{K}
\right)
\ddot{\mathbf{U}}^{n+1}
+ 
	\mathbf{C} \halfscript{\overset{\alpha,\eta}{\mathbf{U}}}{n+1}
=
 -
  \mathbf{K}
\left( \mathbf{U}^n+\delta t \dot{\mathbf{U}}^n + \left(\dfrac{1}{2}- \widetilde{\beta}\right) \delta t^2 \ddot{\mathbf{U}}^n\right).
 \label{NewmarkW}
\end{equation}

\subsection{Approximation of fractional derivatives}
In order to numerically simulate the improper integral \eqref{623}, we consider $R>0$ sufficiently large, so that 
\begin{equation*}
\label{623b}
D^{\alpha,\eta}{\mathbf{U}}(t) \approx  2\mathfrak{C}\int_0^R\mu(\xi)
\mathbf{\Phi}(t,\,\xi)d\xi.
\end{equation*}
(we note the parity of the function $\mu\mathbf{\Phi}$ with respect to $\xi$ 
from \eqref{203} and \eqref{624}).
Let $\xi_\ell:=\ell\delta \xi$
 $\ell=1,\ldots,M$, 
 $\delta\xi=M/R$.
From \eqref{203}, we define
\begin{eqnarray*}
\label{629}\mu_\ell = |\xi_\ell|^{(2\alpha - 1)/2},\quad
\ell=1,\ldots,M,\quad 0 < \alpha < 1.
\end{eqnarray*}
Thus, an approximation of the integral \eqref{623}, is given by
\begin{eqnarray}
\label{631}
\mathbf{C}\halfscript{\overset{\alpha,\eta}{\mathbf{U}}}{n}\approx
2\mathfrak{C}  \displaystyle\sum_{\ell=1}^M\mu_\ell \mathbf{\Phi}^{n}_\ell.
\end{eqnarray}
On the other hand, the system \eqref{624}-\eqref{625} can be discretized using the Crank–Nicolson method \cite{Crank}, in order to maintain the conservation of energy, or its 
non-decrease in case of dissipation.
Combining it with the Newmark scheme \eqref{NewmarkW}
we obtain
the following  conservative numerical scheme:
\begin{equation}
    \begin{cases}
      \left(
    \mathbf{M}
    +  \widetilde{\beta}\delta t^2\,
    \mathbf{K}
\right)
\ddot{\mathbf{U}}^{n+1}
+ 
	\mathfrak{C}  \displaystyle\sum_{\ell=1}^M\mu_\ell \mathbf{\Phi}^{n+1}_\ell
=
 -
  \mathbf{K}
\left( \mathbf{U}^n+\delta t \dot{\mathbf{U}}^n + \left(\dfrac{1}{2}- \widetilde{\beta}\right) \delta t^2 \ddot{\mathbf{U}}^n\right),\\
\mathbf{\Phi}^{n+1}_\ell = \mathbf{\Phi}^{n}_\ell
- \delta t \left(\xi_\ell^2 +\eta\right) \mathbf{\Phi}^{n+\frac{1}{2}}_\ell
+ \delta t \mu_\ell  \dot{\mathbf{U}}^{n+\frac{1}{2}},
\end{cases}
 \label{ConservNumerical}
\end{equation}
where 
$\mathbf{\Phi}^{n+\frac{1}{2}}_\ell
=\dfrac{\mathbf{\Phi}^{n}_\ell+\mathbf{\Phi}^{n+1}_\ell}{2}
$
Using  \eqref{702} again and replacing in 
\eqref{ConservNumerical}$_2$,
we can rewrite \eqref{ConservNumerical}
in the following more explicit and computable way:
\begin{equation}
    \begin{cases}
      \left(
    \mathbf{M}
    +  \widetilde{\gamma}\delta t\,
    \mathbf{C}_{augm}
    +  \widetilde{\beta}\delta t^2\,
    \mathbf{K}
\right)
\ddot{\mathbf{U}}^{n+1}
=
-
	\mathfrak{C}  \displaystyle\sum_{\ell=1}^M\widetilde{\mu}_\ell \mathbf{\Phi}^{n}_\ell
 -\mathbf{C}_{augm}\left(
\dot{\mathbf{U}}^n + \left( 1 - \widetilde{\gamma} \right)
\delta t \ddot{\mathbf{U}}^n
\right)
  \\
\qquad\qquad\qquad\qquad\qquad\qquad\qquad - 
\mathbf{K}
\left( \mathbf{U}^n+\delta t \dot{\mathbf{U}}^n + \left(\dfrac{1}{2}- \widetilde{\beta}\right) \delta t^2 \ddot{\mathbf{U}}^n\right),
\\
\mathbf{\Phi}^{n+1}_\ell = \dfrac{2-\delta t\left(\xi_\ell^2 +\eta\right)}{2+\delta t\left(\xi_\ell^2 +\eta\right)} \mathbf{\Phi}^{n}_\ell
+ 
\dfrac{2\delta t \mu_\ell }{2+\delta t\left(\xi_\ell^2 +\eta\right)}
\dot{\mathbf{U}}^{n+\frac{1}{2}},
\end{cases}
 \label{ExplicitWay}
\end{equation}
where  $\widetilde{\mu}_\ell=\dfrac{2-\delta t\left(\xi_\ell^2 +\eta\right)}{2+\delta t\left(\xi_\ell^2 +\eta\right)}
\mu_\ell$ and $\mathbf{C}_{augm}=\delta t \mathfrak{C}  \left(\displaystyle\sum_{\ell=1}^M
\dfrac{2\widetilde{\mu}_\ell^2}{2+\delta t\left(\xi_\ell^2 +\eta\right)}\right)\textbf{I}_{2J+1} $.

\subsection{Decay of the discrete energy}

Evaluating \eqref{ConservNumerical}$_1$ in $t=t_{n+\frac{1}{2}}$, multiplying 
by $\ddot{\mathbf{U}}^{n+\frac{1}{2}}$, and summing
\eqref{ConservNumerical}$_2$ multiplied by 
$\mathfrak{C}\mathbf{\Phi}^{n+\frac{1}{2}}_\ell$, we obtain that
\begin{eqnarray}
\left[ E_\Delta \right]_n^{n+1} &:=&
 \left[ 
 \dfrac{1}{2} \dot{\mathbf{U}}^T  \mathbf{M} \dot{\mathbf{U}}
 +
  \dfrac{1}{2} {\mathbf{U}}^T  \mathbf{K} {\mathbf{U}}
  +
 \dfrac{\mathfrak{C}}{2}  \displaystyle\sum_{\ell=1}^M {\mu}_\ell \left|\mathbf{\Phi}_\ell\right|^2
  \right]_n^{n+1}\\
  &=&\nonumber
  - \ \mathfrak{C}
   \displaystyle\sum_{\ell=1}^M  \left(\xi_\ell^2 +\eta\right) \left|\mathbf{\Phi}^{n+\frac{1}{2}}_\ell\right|^2
   \label{ener_Mbodje}
\end{eqnarray}
which is consistent with the estimates \eqref{213}--\eqref{314}, and constitutes a correct approximation of the energy and its decreasing behavior.

\subsection{A first example: Reflection and refraction of a wavefront}

We start with a simple example of a wavefront interacting with the domain edges and the transmission interface. To do this, let us consider the initial condition:
\begin{equation}
    \label{CI}
\begin{cases}
\displaystyle u(x,0)=e^{-\frac{(x-x_0)^2}{\varepsilon}}, & u_t(x,0)=0,\qquad -L<x<0 \\
\displaystyle v(x,0)=0, & v_t(x,0)=0,\qquad 0<x<L,
\end{cases}
\end{equation}
with $L=1$, $x_0=-0.5$ and $\varepsilon=0.005$.
In this example we consider the physical parameters
$\rho_1=\rho_2=1$
and $k_1=10$, $k_2=2$.
Additionally, $\alpha=0.5$
and $\eta=1.0$
On the other hand, the discretization is given by a refinement of the space-time variables defined by
$N=500$ and $J=500$.
Additionally, we consider here
$R=10$ and 
$M=10000$.
\\
In Figure 1, the simulation of wave propagation with and without dissipation is observed. Beyond the comparison between the two phenomena, we observe that in both cases, the reflection phenomena at the domain edges, and reflection and refraction in the transmission condition occur properly, which validates the finite volume method for spatial discretization. On the other hand, temporal discretization using the Newmark method coupled with a Crank-Nicolson method ensures conservation and non-growth of discrete energy in each of the two cases respectively, as calculated in \eqref{ener_Mbodje}.

\begin{figure}[hbt]
\begin{center}
    \begin{subfigure}[t]{0.5\textwidth}
        \centering
        \includegraphics[scale=0.4]{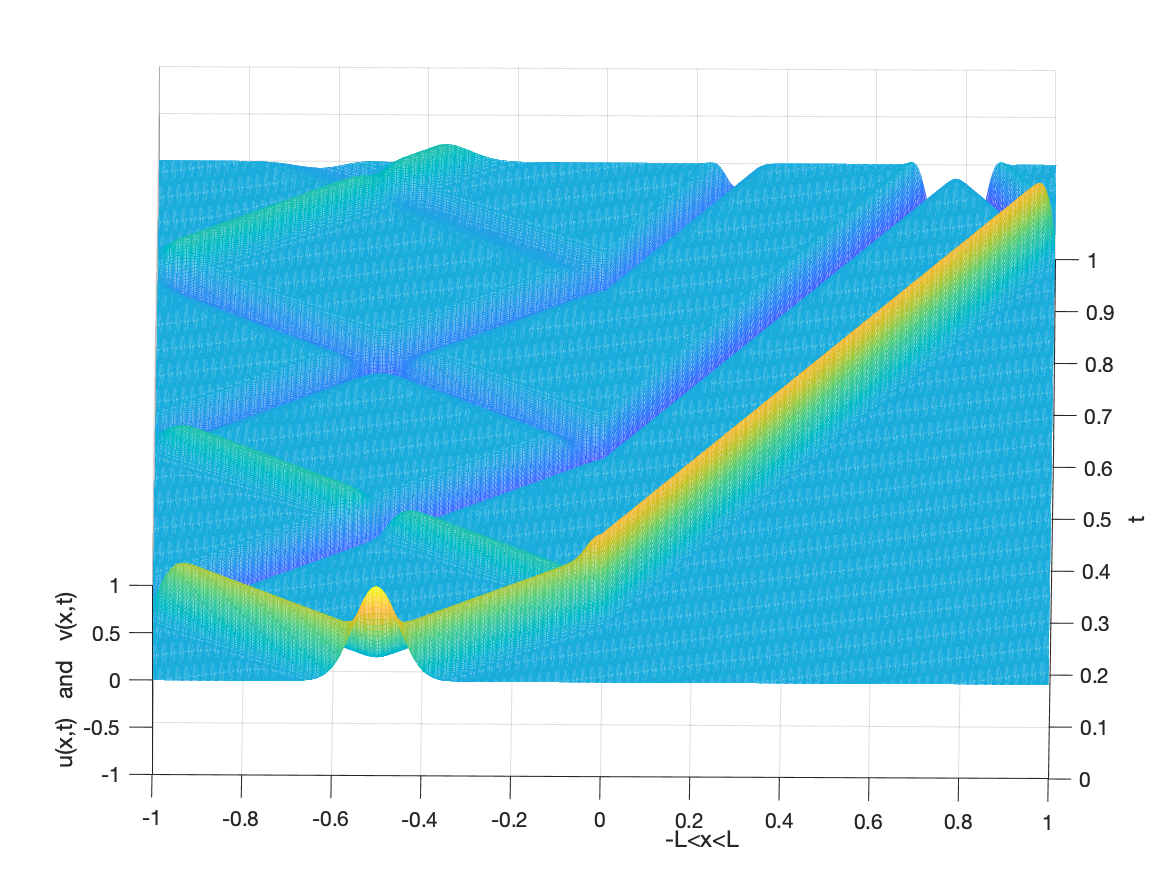}
        \caption{Wavefront without dissipation term.}
    \end{subfigure}%
    \begin{subfigure}[t]{0.5\textwidth}
        \centering
        \includegraphics[scale=0.4]{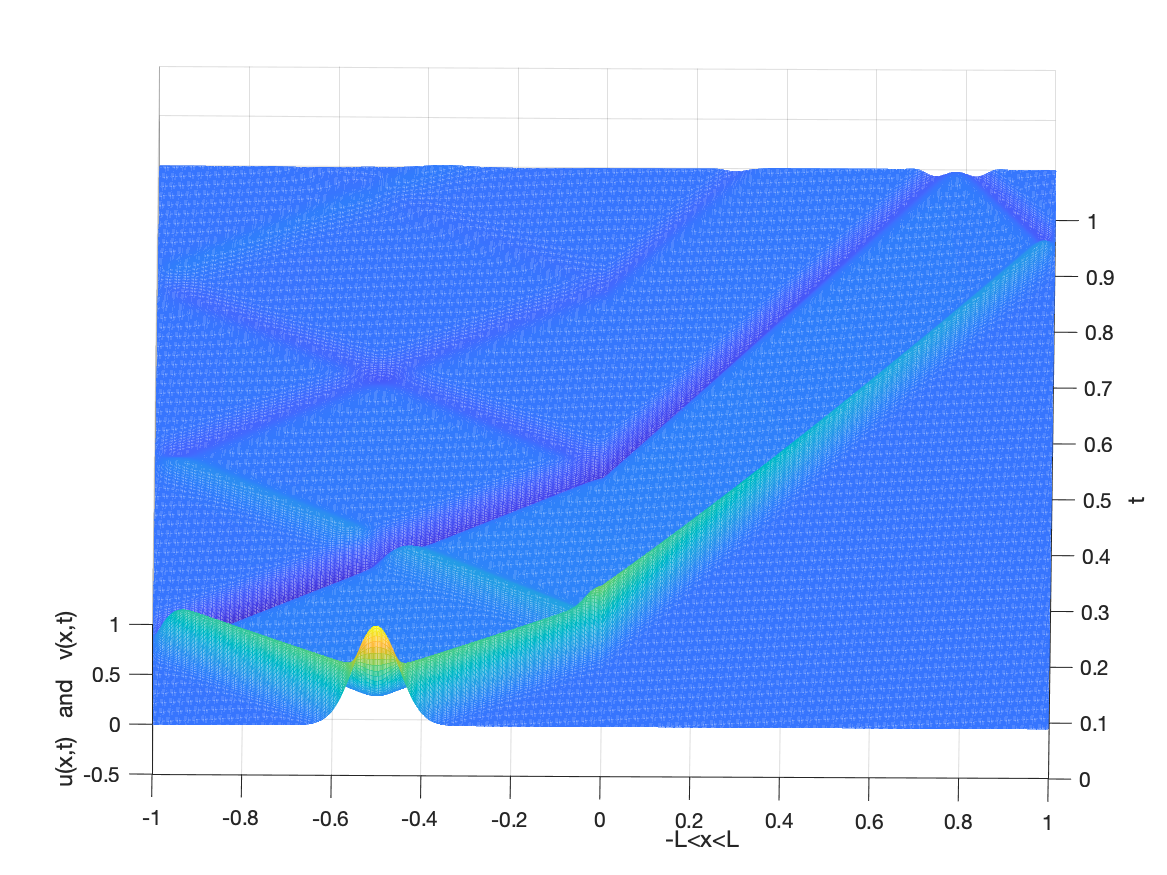} 
        \caption{Wavefront with fractional derivative term.}
    \end{subfigure}%
    \\
\caption{Simulation of the wavefront
for $0<t<1$, with and without dissipation terms.
}\label{Fig2}
\end{center}
\end{figure}
\subsection{Polynomial decay of energy}
Although the numerical decay of energy is visually evident, demonstrating that the asymptotic behavior is polynomial rather than exponential is not as straightforward. This distinction requires a sufficiently long time frame to become apparent. Furthermore, the numerical results are sensitive to both the choice of initial conditions and the values of the parameters $\alpha$ and $\eta$. For instance, while the parameter $\eta$ serves to regularize the equation’s solution, excessively high values of $\eta$ can destabilize our numerical scheme, highlighting the need for careful selection of both parameters and initial conditions to accurately capture the theoretical asymptotic behavior. For this purpose, we select a sufficiently large time interval $T = 10^4 \, \text{sec}$ and an initial condition in $D(A)$ given by
\begin{equation}
    \label{CIbis}
\begin{cases}
\displaystyle u(x,0)=\mathrm{sech}^2\left(-\frac{(x+x_0)^2}{\varepsilon}\right), & u_t(x,0)=0,\qquad -L<x<0 \\
\displaystyle v(x,0)=-\mathrm{sech}^2\left(-\frac{(x-x_0)^2}{\varepsilon}\right), & v_t(x,0)=0,\qquad 0<x<L,
\end{cases}
\end{equation}
with $L=1$, $x_0=0.5$ and $\varepsilon=0.005$.
As in the previous example, we consider the same physical parameters
$\rho_1=\rho_2=1$,
$k_1=10$, $k_2=2$,
and $\alpha=0.5$, 
and  the numerical parameters
$J=500$,
$R=10$ and 
$M=10000$.
On the other hand, we consider in this example, a discretization with $N=10^5$ time steps, and we perform simulations for values 
of $\eta$ equal to $0$, $0.0003$, $0.0003$,
$0.0005$, $0.0003$, $0.0007$, $0.001$
and $0.01$.
\\
In Figure 1, the asymptotic behavior of the energy is evident when $\eta=0$ different from when $\eta\not0$. In the first case ($\eta=0$), the asymptotic behavior is close to the curve $y=C_0 t^{-1}$, with $C_0=2.62$, drawn as a straight line on a log-log scale graph, in accordance with the statement of proposition \ref{prop5.2}. On the other hand, when $\eta>0$, we obtain in the same Figure 1, a package of curves of the Energy for different values of $\eta$, all of them bounded by a curve $y=C_4 t^{-4}$, with $C_4=1.75\times 10^{10}$, in accordance with the statement of proposition \ref{prop5.3}. Indeed, according to Proposition \ref{prop5.3}, if $\alpha=0.5$ it is obtained that the rate of decay of the energy is given by $-\dfrac{2}{1-\alpha}=-4$.
\begin{figure}
    \centering
    \includegraphics[width=0.5\linewidth]{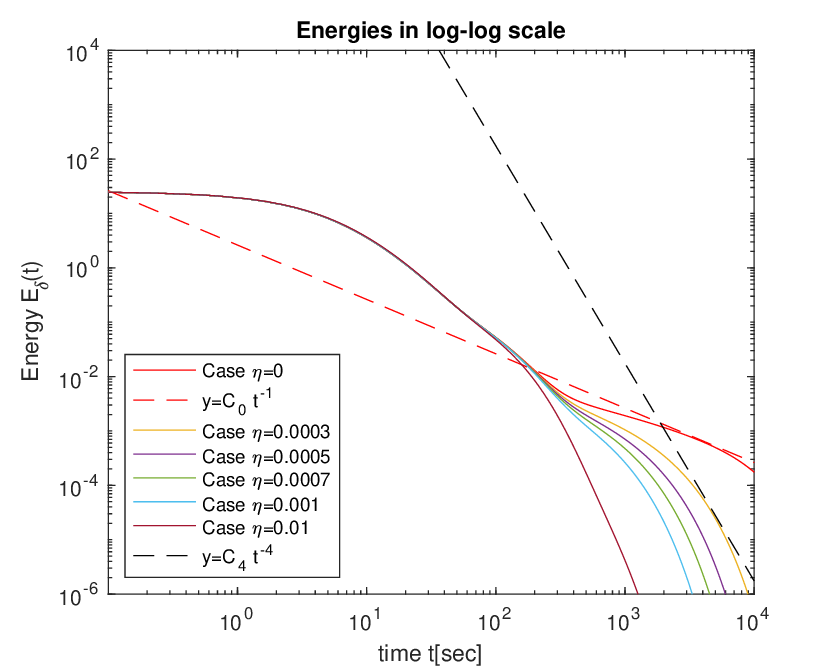}
    \caption{Polynomial decay of the energy for different values of $\eta$
    and comparison with the polynomial bounds $y=C_0t^{-1}$ and $y=C_4t^{-4}$;
    $C_0=2.62$, $C_4=1.75\times 10^{10}$.}
    \label{fig:trans_etas}
\end{figure}

\section*{Conflict of interest} 
There is no potential conflict of interest.

\bibliographystyle{elsarticle-num}

\end{document}